\documentclass[a4paper, 12pt]{amsart}

\usepackage{xcolor}
\usepackage{todonotes}
\usepackage{amsmath, amsthm, amssymb, amsxtra}
\usepackage{amsfonts}
\usepackage[utf8]{inputenc}
\usepackage[dvips]{epsfig}
\usepackage{graphicx}
\usepackage[english]{babel}
\usepackage{hyperref}

\usepackage{graphicx}
\usepackage{latexsym}
\usepackage{mathtools}
\usepackage{esint}
\usepackage{float}

\theoremstyle{plain}
\newtheorem{thm}{Theorem}[section]

\newtheorem{lem}[thm]{Lemma}
\newtheorem{prop}[thm]{Proposition}

\theoremstyle{definition}
\newtheorem{defn}[thm]{Definition}

\theoremstyle{remark}

\numberwithin{equation}{section}

\newcommand{\average}{{\mathchoice {\kern1ex\vcenter{\hrule height.4pt
				width 6pt depth0pt} \kern-9.7pt} {\kern1ex\vcenter{\hrule
				height.4pt width 4.3pt depth0pt} \kern-7pt} {} {} }}

\newcommand{\R}{\mathbb R}

\renewcommand{\L}{\mathcal L}

\newcommand{\p}{\partial}

\newcommand{\comment}[1]{}

\begin{document}
	
    \title[Boundary estimates for parabolic equations in \texorpdfstring{$C^1$}{C1} domains]{Boundary estimates for parabolic non-divergence equations in \texorpdfstring{$C^1$}{C1} domains}
    \author{P\^edra D. S. Andrade}
    \address{Department of Mathematics,\newline
        Paris Lodron Universit\"at Salzburg, \newline
        Hellbrunnerstrasse 34, A-5020 Salzburg, Austria.}
    \email{\tt pedra.andrade@plus.ac.at}
    
    \author{Clara Torres-Latorre}
    \address{Instituto de Ciencias Matemáticas\newline Consejo Superior de Investigaciones Científicas\newline
        C/ Nicolás Cabrera, 13-15, 28049 Madrid, Spain.}
    \email{\tt clara.torres@icmat.es}
    
    \begin{abstract}
        We obtain boundary nondegeneracy and regularity estimates for solutions to
        non-divergence form parabolic equations in parabolic $C^1$ domains, providing
        explicit moduli of continuity. Our results extend the classical Hopf-Oleinik
        lemma and boundary Lipschitz regularity for domains with
        $C^{1,\mathrm{Dini}}$ boundaries, while also recovering the known
        $C^{1-\varepsilon}$ regularity for parabolic Lipschitz domains, unifying both regimes
        with a single proof.
    \end{abstract}
    
    \subjclass{35B65, 35K20}
    \keywords{Parabolic equations, non-divergence form, boundary regularity, Hopf-Oleinik lemma}
    
    \maketitle

    \section{Introduction}\label{sect:intro}

    The Hopf-Oleinik lemma and boundary Lipschitz regularity for elliptic and parabolic equations are well understood in $C^{1,\mathrm{Dini}}$ domains, while in Lipschitz domains solutions are only H\"older continuous up to the boundary. In this paper, we prove boundary nondegeneracy and regularity estimates for solutions to parabolic non-divergence form equations
    $$\partial_t u - \mathcal{L}u = f$$
    in parabolic $C^1$ domains, providing explicit moduli of continuity. Our results recover the H\"older regularity in Lipschitz domains and the Hopf-Oleinik lemma and Lipschitz regularity in $C^{1,\mathrm{Dini}}$ domains, unifying both regimes with a single proof. To our knowledge, these estimates are new even for the heat equation.

    This is the second part of \cite{Tor26}, where the elliptic counterparts of Theorems \ref{thm:hopf} and \ref{thm:upper} were proved using a new approach based on barriers constructed from a locally regularized distance. The key feature of this construction is that the barrier at each point depends only on the boundary at a comparable scale, which allows meaningful estimates even when the $C^1$ modulus of continuity is not uniform. In the present paper, we adapt this construction to the parabolic setting.
    
    As a byproduct, we also obtain an endpoint interior regularity estimate (Theorem \ref{thm:interior_reg_modulus}), a parabolic counterpart of results by Teixeira \cite{Tei14} and Daskalopoulos, Kuusi, and Mingione \cite{DKM14}, which appears to be new.

    \newpage
    
    \subsection{Main results}
		Our first main result is a {\it nondegeneracy property} that becomes the Hopf-Oleinik lemma when the $C^1$ modulus of continuity of the domain is Dini.
	\begin{thm}\label{thm:hopf}
		Let $\L$ be as in \eqref{eq:non-divergence_operator}, let $\Omega$ satisfy the interior $C^1$ condition at $0$ with modulus $\omega$ in the sense of Definition \ref{defn:interiorC1}, and let $u$ be a nonnegative solution to 
		\[
		\p_t u - \L u = 0\quad \text{in} \quad \Omega.
		\]
		Then, for every $0 < \rho < \frac{r}{4} < \frac{r_0}{4}$,
		$$\frac{u(\rho e_n, 0)}{\rho} \geq \frac{1}{C}\frac{u(r e_n, -3r^2)}{r}\exp \left(-C\int_\rho^{2r}\omega(s)\frac{\mathrm{d}s}{s}\right),$$
		where $C$ and $r_0$ are positive and depend only on $\omega$, the dimension, and ellipticity constants.
	\end{thm}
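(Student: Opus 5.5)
The plan is to prove a one-step comparison at each dyadic scale and then iterate, so that the losses multiply into the exponential of the Dini-type integral. Set $r_k = 2^{-k} r$ and let $\omega_k$ be comparable to $\omega(r_k)$, so that $\sum_k \omega(r_k) \approx \int \omega(s)\,\mathrm{d}s/s$. The core claim concerns the quantities $m_k := u(r_k e_n, t_k)/r_k$, taken at suitably chosen times $t_k$ increasing from $-3r^2$ to $0$ with $\sum (t_{k+1}-t_k) \le 3r^2$. The claim is
\[
m_{k+1} \geq (1 - C\omega(r_k))\, m_k .
\]
Taking logarithms then gives the exponential factor; the constant $1/C$ in front absorbs the first and last steps and the Harnack links.

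For the one-step estimate I would use a barrier built from a regularized distance adapted to the scale $r_k$. By the interior $C^1$ condition (Definition \ref{defn:interiorC1}), in the parabolic cylinder $Q_{r_k}$ the domain contains the region above a graph. After a rotation this region contains $\{x_n > \omega(r_k)\,|x'| \}$ intersected with the cylinder. I would take a barrier of the form
\[
\phi = \Big(x_n - C\omega(r_k)\,\frac{|x'|^2}{r_k} + \ldots\Big)_+ ,
\]
or more robustly a regularized distance $d_k$ to a cone of aperture $\omega(r_k)$, mollified at scale comparable to the distance. Such a $d_k$ satisfies $|\nabla d_k - e_n| \le C\omega(r_k)$ and $|D^2 d_k| \le C\omega(r_k)/d_k$. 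I would then correct it to
\[
\psi = d_k - \frac{C\omega(r_k)}{r_k}\,|x'|^2 - \frac{C\omega(r_k)}{r_k}\,(t-t_k) ,
\]
possibly also subtracting a term $d_k^{1+\alpha}/r_k^\alpha$ to make $\psi$ a strict subsolution. The point is that $\partial_t\psi - \mathcal L\psi \le 0$ in the annular region where $\psi>0$, and $\psi\le 0$ on the lateral parts.

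By interior Harnack (parabolic, with time lag), on the top portion of the region, where $x_n \approx r_k$, we have $u \ge c\, u(r_k e_n, t_k)$. However, this Harnack step alone would lose a fixed constant at every scale. To avoid that, the comparison has to be arranged so that the barrier equals $m_k\,(1-C\omega_k)\,x_n$ up to relative error $\omega_k$ near $r_{k+1}e_n$. One way is to compare $u$ with $m_k\psi$ on a region where the ratio $u/x_n$ is already controlled to within a factor $1-C\omega_k$ on the parabolic boundary. This in turn requires an induction hypothesis on the infimum of $u/x_n$ over a whole box at scale $r_k$, not just at one point. So I would set $m_k := \inf_{\text{box}_k} u/d$ and prove $m_{k+1}\ge (1-C\omega_k)m_k$ by the maximum principle, applied to $u - m_k\psi$ on $\text{box}_k \setminus \text{box}_{k+1}$-type regions.

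The main obstacle is making this multiplicative, loss-free iteration work in the parabolic geometry. The boxes must be nested in time so that each comparison uses the values on the earlier box, and the bottom, lateral and top boundaries must be handled consistently. The error terms in $\partial_t\psi - \mathcal L\psi$ must be bounded by $\omega_k/r_k$ times a quantity that stays positive. The accumulated time shifts must also fit within $[-3r^2,0]$. Finally, the initial step, from $u(re_n,-3r^2)$ to the infimum over the first box, and the final step, from the box infimum down to $u(\rho e_n,0)$, each cost a single Harnack constant. This gives the stated bound with the integral from $\rho$ to $2r$.
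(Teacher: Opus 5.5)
There is a genuine gap. The one-step estimate $m_{k+1}\geq(1-C\omega(r_k))\,m_k$ is the whole difficulty, and the mechanism you sketch for it does not work.

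\textbf{Boxes that reach the boundary.} Suppose $m_k=\inf_{\mathrm{box}_k}u/d$ is taken over boxes that reach $\partial_\Gamma\Omega$. A positive lower bound for $m_k$ is then itself a Hopf-type statement at every boundary point of the box. When $\omega$ is not Dini, $u(\rho e_n,0)/\rho$ may tend to $0$; this is why the theorem carries an exponential factor. So $m_k=0$ and the iteration is vacuous. Away from $0$ the domain is only assumed parabolic Lipschitz, where $u/d$ need not be bounded below either. For the same reason your initial step does not cost just ``a single Harnack constant''. Boundary Harnack gives $u\geq c\,u(re_n,-3r^2)\,\varphi$ for a suitably normalized positive solution $\varphi$ vanishing on $\partial_\Gamma\Omega$, not $u\geq c\,u(re_n,-3r^2)\,d/r$.

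\textbf{Boxes that stay away from the boundary.} Suppose instead the boxes stay at height $x_n\sim r_k$. The maximum principle for $u-m_k\psi$ in the region below them then needs $\psi\leq 0$ on the lateral and initial faces of that region wherever you have no information, i.e.\ up to heights $\sim r_{k+1}$. Corrections of size $O(\omega(r_k))$ cannot achieve this. Corrections large enough to do so, with a time term compensating their contribution to $\mathcal{L}\psi$, produce a relative error of order one at $r_{k+1}e_n$. That is exactly the per-scale constant loss you wanted to avoid.

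\textbf{The barrier.} Your $\psi$ is not a subsolution near $\partial_\Gamma\Omega$. The error $|\partial_t d_k|+|D^2d_k|\lesssim\omega(r_k)/d_k$ is not absorbed by the $O(\omega(r_k)/r_k)$ contributions of the $|x'|^2$ and $t$ terms. Subtracting $d_k^{1+\alpha}/r_k^{\alpha}$ goes the wrong way, since near the boundary it decreases $\mathcal{L}\psi$. What works is a power $d^{1+\varepsilon}$ with $\varepsilon\simeq\omega$, as in Lemma \ref{lem:barriers}.

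\textbf{The missing idea.} Replace the hypothesis $u\geq m_k d$ on a box by $u\geq c_k\varphi_k$ in $\Omega\cap Q_{2^{-k-2}}$. Here $\varphi_k$ is the solution of Proposition \ref{prop:special_solns} in $\Omega\cap Q_{2^{-k-1}}$. It vanishes on $\partial_\Gamma\Omega$, and since it is trapped between $(2r)^{-\varepsilon}d^{1+\varepsilon}$ and $(2r)^{\varepsilon}d^{1-\varepsilon}$, it lies within $O(r\varepsilon)$ of $d$ in $L^\infty$.
\begin{enumerate}
\item After normalizing at $(2^{-k-2}e_n,-3\cdot 4^{-(k+2)})$, consecutive functions $\varphi_{k-1},\varphi_k$ differ by at most $M\varepsilon_{k-1}$ (Lemma \ref{Lemma_ATL}).
\item The almost positivity Lemma \ref{lem:almost_positivity} is a quantitative maximum principle. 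It says that a solution vanishing on $\partial_\Gamma\Omega$, bounded below by $-\mu_0$, and at least $1$ at $(\frac{e_n}{2},-\frac34)$ is nonnegative in $\Omega\cap Q_{1/2}$.
\item Applying it to $h=(v-(1-\delta)w)/\delta$ with $\delta=\mu_0^{-1}M\varepsilon_{k-1}$ converts the additive closeness into the multiplicative bound $v\geq(1-\delta)w$ all the way up to the boundary. This is what yields $c_k=(1-A\varepsilon_{k-1})c_{k-1}$ with no constant loss per scale.
\end{enumerate}
The base case is boundary Harnack. The last step uses the lower growth $\varphi_k\geq(2^{-k})^{-\varepsilon_k}d^{1+\varepsilon_k}$ at $d\sim\rho\sim 2^{-k}$. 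Your outline has no substitute for this additive-to-multiplicative step, which you yourself flag as the main obstacle.
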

	
	On the other hand, having an exterior $C^1$ condition ensures a bound on the growth of $\L$-harmonic functions at the boundary, which becomes linear when the $C^1$ modulus is Dini. The regularity up to the boundary follows by standard techniques.
	\begin{thm}\label{thm:upper}
		Let $\L$ be as in \eqref{eq:non-divergence_operator}, let $\Omega$ satisfy the exterior $C^1$ condition at $0$ with modulus $\omega$ in the sense of Definition \ref{defn:exteriorC1}, and let $u$ be a solution to
		$$\left\{\begin{array}{rclll}
			\p_t u - \L u & = & f & \text{in} & \Omega\cap Q_1\\
			u & = & g & \text{on} & \p_p\Omega\cap Q_1,
		\end{array}\right.$$
		where $f \in L^{n+1}$, $\omega_f(s) := \sup\limits_{x \in \Omega\cap Q_1}s^{-1/(n+1)}\|f\|_{L^{n+1}(Q_s(x))}$, and $g \in C^{1,\omega_g}_p$. 
		
		Then, for every $0 < \rho < \frac{r}{4} < \frac{r_0}{4}$,
		$$\frac{\|v\|_{L^\infty(Q_\rho)}}{\rho} \leq C\left(\frac{\|v\|_{L^\infty(Q_r)}}{r} + \int_\rho^{2r}[\omega_f+\omega_g](s)\frac{\mathrm{d}s}{s}\right)\exp \left(C\int_\rho^{2r}\omega(s)\frac{\mathrm{d}s}{s}\right),
        $$
		where $v = u - g(0, 0) - \nabla\!_x g (0, 0)\cdot x'$.
		
		Moreover, if $\L$ has doubly Dini continuous coefficients, $u \in C^{\tilde\omega}_p(\overline{\Omega}\cap Q_{1/2})$, where
		$$\tilde\omega({\theta}) = C{\theta}\left(\|u\|_{L^\infty(Q_1)}+\int_{\theta}^{2r_0}[\omega_f+\omega_g](s)\frac{\mathrm{d}s}{s}\right)\exp \left(C\int_{\theta}^{2r_0}\omega(s)\frac{\mathrm{d}s}{s}\right)$$
		is a modulus of continuity. The constants $C$ and $r_0$ are positive and depend only on $\omega$, $\omega_f$, $\omega_g$, the dimension, and ellipticity constants; in the second part, they also depend on the modulus of continuity of the coefficients of $\L$.
	\end{thm}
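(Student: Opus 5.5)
The plan is to prove the growth estimate through an iteration over dyadic parabolic scales, using at each scale a supersolution barrier adapted to the exterior $C^1$ condition, and then to obtain the regularity statement by combining this boundary bound with interior estimates.

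\textbf{Reductions.} Replacing $u$ by $v = u - g(0,0) - \nabla\!_x g(0,0)\cdot x'$, we get $\partial_t v - \L v = f$, since $\L$ annihilates affine functions in $x$. On $\partial_p\Omega$ we have $|v| \le [g]\,s\,\omega_g(s)$ in $Q_s$, because $g \in C^{1,\omega_g}_p$. By the exterior $C^1$ condition (Definition~\ref{defn:exteriorC1}), $\Omega\cap Q_s$ lies in the region $\{x_n > -s\,\omega(s)\}$ after rotation.

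\textbf{Barrier at a single scale.} At each scale $s = 2^{-k} r$ we construct a supersolution $\phi_s$ in $\Omega\cap Q_s$. It should satisfy $\phi_s \ge 0$ on $\partial_p\Omega\cap Q_s$ and $\phi_s \ge 1$ on the lateral and bottom parabolic boundary of $Q_s$. On $Q_{s/2}$ it should be bounded by
\[
\phi_s \le \frac{x_n^+}{s}\,(1 + C\omega(s)) + C\omega(s).
\]
The natural candidate is
\[
\phi_s = \frac{x_n + s\omega(s)}{s} + C\frac{|x'|^2 - \Lambda t}{s^2}\cdot(\text{small factor}),
\]
which mimics the elliptic construction in \cite{Tor26}. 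The quadratic-in-$x'$ and linear-in-$t$ correction, with the sign chosen so that $\partial_t\phi - \L\phi \ge 0$, forces $\phi_s \ge 1$ on the sides.

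The same barrier with a source term $\pm\|f\|$ is handled via ABP (the $L^{n+1}$ norm gives $\omega_f(s)$). Combining, the comparison principle yields the recursive inequality
\[
\frac{M(s/2)}{s/2} \le (1 + C\omega(s))\frac{M(s)}{s} + C(\omega_f + \omega_g)(s),
\]
where $M(s) = \|v\|_{L^\infty(Q_s)}$.

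\textbf{Main obstacle.} The hardest step is obtaining the multiplicative factor $1 + C\omega(s)$, rather than a fixed constant greater than $1$. A fixed constant would only give $C^{1-\varepsilon}$ regularity. To get the sharp factor, the barrier must be linear to leading order, with the error terms (the correction needed for supersolution and the boundary shift $s\omega(s)$) all of size $O(\omega(s))$. I would follow \cite{Tor26}: use a regularized distance at scale $s$, namely the mollification of $x_n - \text{graph}$ at scale $s$. Its Hessian is $O(\omega(s)/s)$, so $\L$ applied to it is controlled. Adding $C\omega(s)\,x_n^2/s$, or a cheaper term in $t$, then makes it a supersolution. The time derivative of the graph is controlled by the parabolic $C^1$ condition, which also gives $O(\omega(s)/s)$.

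\textbf{Iteration.} Iterating the recursive inequality over $k$ and using $\prod_k (1 + C\omega(2^{-k}r)) \le \exp\bigl(C\sum_k \omega(2^{-k}r)\bigr)$, with the sum comparable to $\int \omega\,\mathrm{d}s/s$, gives the stated bound. The additive terms sum to $\int(\omega_f + \omega_g)\,\mathrm{d}s/s$.

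\textbf{Regularity statement.} The second part follows in the standard way. For $x,y \in \Omega$, compare $|x-y|$ with the distance to the boundary $d$. When $|x-y| \ge d/2$, use the boundary growth estimate at the nearest boundary point. Otherwise, rescale and apply the interior $C^{1}$-type estimate for doubly Dini coefficients (Theorem~\ref{thm:interior_reg_modulus}) in $Q_{d/2}$. This gives oscillation bounded by $\tilde\omega$, and $\tilde\omega$ is a modulus because $\theta\exp\bigl(C\int_\theta\omega\,\mathrm{d}s/s\bigr) \to 0$ as $\theta \to 0$, since $\omega(s) \to 0$.
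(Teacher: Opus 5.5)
\textbf{There is a genuine gap.} Your overall architecture matches the paper: a dyadic iteration with a multiplicative factor $1+C\omega$ per step, summed into an exponential. The mechanism you propose for obtaining that factor cannot work, however, because the single-scale barrier $\phi_s$ does not exist, even for $\Omega=\{x_n>0\}$ and $\L=\Delta$.

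To conclude $v\le M(s)\phi_s$, you need $\phi_s$ to be a supersolution with $\phi_s\ge 0$ on $\{x_n=0\}$ and $\phi_s\ge 1$ on the rest of $\p_p(Q_s\cap\{x_n>0\})$. Comparison then gives $\phi_s\ge h$, where $h$ is the caloric function with exactly these boundary values. Since $x_n/s$ is itself a solution, $h-x_n/s\ge 0$ vanishes on $\{x_n=0\}$ and on $\{x_n=s\}$, but equals $1-x_n/s>0$ on the sides and bottom. By the strong maximum principle and parabolic scaling, $h(\tfrac{s}{4}e_n,-\tfrac{s^2}{8})=\tfrac14+c_0$ with $c_0>0$ independent of $s$. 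At that point of $Q_{s/2}$, your requirement forces $\phi_s\le\tfrac14+\tfrac54 C\omega(s)$, which is false once $\omega(s)<4c_0/(5C)$.

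Put differently, any step whose only input is $M(s)=\|v\|_{L^\infty(Q_s)}$ must pay the caloric measure of the far boundary. Taking $v=h$ shows this costs a fixed factor $1+c$ per dyadic scale, which is exactly the $C^{1-\varepsilon}$ loss you identified. Your explicit candidate also has the wrong sign: $(\p_t-\L)(|x'|^2-\Lambda t)=-\Lambda-2\sum_{i<n}a_{ii}<0$. Likewise, adding $+x_n^2$ pushes toward a subsolution. With a small prefactor, such a correction could not reach $1$ on $\{|x'|=s\}$ anyway.

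\textbf{The missing idea is to propagate a \emph{profile} inequality instead of a sup bound.} The paper uses the barriers $d^{1\pm\varepsilon}$, built from the locally regularized distance of Proposition~\ref{prop:regularized_distance}, to produce genuine solutions $\varphi_k$. These vanish on $\p_\Gamma\Omega$ and satisfy $\|\varphi_k-d\|_{L^\infty}\le K4^{-k}\varepsilon_k$. It then proves by induction that $|u|\le c_k\varphi_k+4^{-k}d_k$ in $\Omega\cap Q_{4^{-k-1}}$. The induction step works as follows.
\begin{enumerate}
\item Since $c_{k-1}\varphi_{k-1}$ is a solution on the whole larger cylinder, comparing on $Q_{4^{-k}}$ loses nothing.
\item The additive error is converted into $Cd_{k-1}\varphi_{k-1}$ by the boundary Harnack inequality.
\item $\varphi_{k-1}$ is replaced by $\varphi_k$ at cost $1+A\varepsilon_{k-1}$, using Lemma~\ref{Lemma_ATL2} together with the almost positivity Lemma~\ref{lem:almost_positivity}.
\end{enumerate}
A fixed constant therefore enters multiplicatively only once, in $c_0$; afterwards the boundary Harnack constant only multiplies the small additive terms $d_{k-1}$.

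\textbf{On the regularity part.} Your case analysis is close to the paper's, with two gaps. First, Theorem~\ref{thm:interior_reg_modulus} only handles the piece with source $f$ and zero boundary data. The homogeneous piece needs the interior Lipschitz estimate of Theorem~\ref{thm:gradient_estimate}, fed with the boundary growth bound at the nearest boundary point. Second, you must control $|\nabla\!_x g|$ to pass from $v$ back to $u$.
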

	
	\subsection{Background}

    For the elliptic background on the Hopf-Oleinik lemma and boundary regularity, we refer to \cite{Tor26} and the survey \cite{AN22}. Here we focus on the parabolic literature.
    
    The boundary point lemma was extended to the parabolic setting by Friedman \cite{Fri58} and V\'yborn\'y \cite{Vyb57}, building on Nirenberg's parabolic strong maximum principle \cite{Nir53}. Their results required domains with the interior ball condition. Kamynin and Khimchenko \cite{KK73} proved the parabolic Hopf-Oleinik lemma in parabolic $C^{1,\mathrm{Dini}}$ domains for classical solutions, and Kamynin \cite{Kam88} later showed that under an interior cone condition, the normal derivative inequality holds at some point in every neighbourhood of $x_0$, though not necessarily at $x_0$ itself. On the other hand, Lieberman \cite{Lie85} constructed a parabolic regularized distance adapted to parabolic scaling, and used it to derive the boundary point lemma via barriers under a Dini condition. More recently, Nazarov \cite{Naz12} obtained sharp versions of the Hopf-Oleinik lemma for parabolic non-divergence equations with unbounded lower-order coefficients, using the iterative method of Ladyzhenskaya and Ural'tseva \cite{LU88}.

    Regarding boundary regularity, Kamynin and Khimchenko \cite{KK74,KK75} established boundary Lipschitz estimates for parabolic equations in $C^{1,\mathrm{Dini}}$ domains. Boundary gradient estimates were also obtained by Ladyzhenskaya and Ural'tseva \cite{LU88}, and in their sharpest form by Nazarov \cite{Naz12}, who identified optimal conditions on the lower-order coefficients. More recently, Ma, Moreira, and Wang \cite{MMW17} proved boundary differentiability with explicit moduli of continuity for fully nonlinear parabolic equations under a Reifenberg $C^{1,\mathrm{Dini}}$ condition, and Dong, Li, and Lian \cite{DLL25} obtained boundary Lipschitz estimates and a Hopf lemma for Pucci-class parabolic equations in $C^{1,\mathrm{Dini}}$ domains. However, all of these results require the $C^{1,\mathrm{Dini}}$ condition, leaving open the question of what happens for rougher $C^1$ boundaries, a situation that arises naturally in free boundary problems such as the Stefan problem \cite{HR19,FRT26}.

    \subsection{Plan of the paper}
    This paper is organized as follows.

    We begin in Section \ref{sect:previ} by introducing the setting of the problem. Section \ref{sect:aux} collects auxiliary results used throughout the manuscript, such as an almost positivity property and the boundary Harnack inequality. Then, in Section \ref{sect:cones} we build barriers based on a localized regularized distance. Finally, Section \ref{sect:C1} is devoted to the proofs of our main results: Theorems \ref{thm:hopf} and~\ref{thm:upper}. The proof of the interior regularity estimate, Theorem \ref{thm:interior_reg_modulus}, is given in Appendix \ref{sect:app}.
	
    \subsection*{Acknowledgements}
    
    Much of this work was carried out during the visit of C.T.-L. to Salzburg. The authors wish to thank Prof. Verena Bögelein and the Department of Mathematics at the Paris Lodron Universität Salzburg for their support and hospitality.
    
    C.T.-L. has received funding from the European Research Council (ERC) under the Grant Agreement No. 862342, from AEI project PID2024-156429NB-I00 (Spain), and from the Grant CEX2023-001347-S funded by
    
    \noindent MICIU/AEI/10.13039/501100011033 (Spain).
    
    P.D.S.A. is partially supported by the Austrian Science Fund (FWF)
    
    \noindent [10.55776/P36295].

    \newpage
    
    \section{Preliminaries}\label{sect:previ}
	
	\subsection{Setting}
	In this section, we introduce the notation and basic definitions that will be used throughout the paper. Given $x \in \R^n$, we write $x' = (x_1,\ldots,x_{n-1})$. $B_r(x)$ denotes the open ball of radius $r$ of $\R^n$, centered at $x$, and $B'_r(x')$ stands for the open ball of $\R^{n-1}$, with center at $x'$. We also introduce the {\it parabolic cylinders}
	\[Q_r(x,t) := B_r'(x')\times(x_n-r,x_n+r)\times(t-r^2,t) \subset \R^{n+1}
    \]
    and 
	\[
   Q_r'(x,t) := B_r'(x')\times(t-r^2,t) \subset \R^n.
	\]
	When $(x, t) =(0, 0)$, we simply write $Q_r := Q_r(0,0)$ and $Q_r' := Q_r'(0,0)$.
	
    Let $\Omega \subset \R^{n}\times \R$ be an open connected set. For points $(x, t), (y, s) \in \Omega$, we define the {\it parabolic distance} by
	\[
	{\rm d_p}((x, t),(y, s)) := |x-y|+|t-s|^{1/2}.
	\]
	  Using this distance, we define the {\it parabolic Lipschitz seminorm $(\alpha = 1)$ and parabolic H\"older seminorm $(\alpha \in (0,1))$} of a function $g : \Omega \to \R$ as
	$$
	[g]_{C^{0,\alpha}_p(\Omega)} =\sup_{ \substack{(x,t),(y,s)\in \Omega \\ (x,t) \neq (y,s) }} \frac{|g(x,t)-g(y,s)|}{(|x-y|+|t-s|^{1/2})^\alpha}.
	$$
	This means that $g$ is $\alpha$-H\"older continuous with respect to the spatial variables and $\frac{\alpha}{2}$-H\"older continuous with respect to the time variable. 
    
    We define the {\it parabolic Lipschitz and Hölder norms}
	$$
	\|g\|_{C^{0,\alpha}_p(\Omega)} := \|g\|_{L^\infty(\Omega)} + [g]_{C^{0,\alpha}_p(\Omega)}.
	$$
    
    We use standard notation for Sobolev spaces: $W^{k,p}(\Omega)$ denotes the space of functions whose weak derivatives up to order $k$ belong to $L^p(\Omega)$. We write $W^{k,p}_{x,\,\mathrm{loc}}(\Omega)$ to indicate that only spatial derivatives are considered and the property holds locally, and similarly $W^{k,p}_{t,\,\mathrm{loc}}(\Omega)$ for time derivatives. 
    
    Throughout the text, we omit the domain from the notation whenever it does not lead to confusion, both for Hölder and Sobolev spaces.
    
    Next, we introduce the notion of {\it parabolic Lipschitz domains}.
	\begin{defn}\label{defn:lip_domain}
		We say that $\Omega$ is a {\it parabolic Lipschitz domain} in $Q_R$ with Lipschitz constant $L$ if $\Omega$ is a space-time domain whose boundary can be locally represented as the epigraph of a parabolic Lipschitz function 
        \[
        {\Gamma : B_R'\times[-R^2,0] \to \R}, \,  \, \, \Gamma(0,0) = 0,
        \]
        such that 
		\[
        \Omega = \big\{(x,t) \in Q_R \ | \ x_n > \Gamma(x',t) \big\}, \quad \|\Gamma\|_{C_p^{0,1}} \leq L.
        \]
		In this context, we denote the {\it lateral boundary}
		$$\p_\Gamma\Omega := \big\{(x,t) \in Q_R \ | \ x_n = \Gamma(x',t) \big\},$$
		and the {\it parabolic boundary}
		$$\p_p\Omega := \p_\Gamma\Omega\cup\big(\overline{\Omega}\cap\p Q_R\cap\{t < 0\}\big).$$
	\end{defn}

For simplicity, we restrict our attention to the following class of moduli of continuity.

	\begin{defn}\label{defn:modulus}
		We say that a function $\omega : [0,{\eta}_0) \to [0,\infty)$ is a \emph{modulus of continuity} if it satisfies the following properties:
    \begin{enumerate}
    \item $\omega$ is continuous on $[0, \eta_0)$,
    \item $\omega$ is strictly increasing,
    \item $\omega(0) = 0$.
\end{enumerate}
  	\end{defn}  

Now, we define regularity spaces with respect to a general modulus of continuity.
\begin{defn}
Let $\omega$ be a modulus of continuity. We say that $u$ belongs to $C^{0,\omega}_p$ if
$$
\|u - u(x_0,t_0)\|_{L^\infty(Q_r(x_0,t_0))} \leq \omega(r)
$$
for all parabolic cylinders $Q_r(x_0,t_0)$ contained in the domain of $u$.

We say that $u$ belongs to $C^{1,\omega}_p$ if its spatial gradient $\nabla\!_x u$ exists and satisfies
$$
\|\nabla\!_x u - \nabla\!_x u(x_0,t_0)\|_{L^\infty(Q_r(x_0,t_0))} \leq \omega(r),
$$
and additionally
$$
\|u - u(x_0,t_0) - \nabla\!_xu(x_0,t_0)(\cdot-x_0)\|_{L^\infty(Q_r(x_0,t_0))} \leq r\omega(r)
$$
for all parabolic cylinders $Q_r(x_0,t_0)$ contained in the domain of $u$.
\end{defn}

   Next, we introduce the interior pointwise $C^1$ condition for parabolic Lipschitz domains.
   
	\begin{defn}\label{defn:interiorC1}
		Let $\Omega$ be a parabolic Lipschitz domain. We say that $\Omega$ satisfies the {\it interior $C^1$ condition} at $(x_0, t_0) \in \p_p\Omega$ with modulus of continuity $\omega$ if, after a spatial rotation and a translation that send $(x_0, t_0)$ to the origin, there exists $\eta_0>0$ such that
		$$
		\left\{x_n > \left(|x'| + |t|^{\frac{1}{2}}\right)\omega(|x'| + |t|^{\frac{1}{2}})\right\}\cap Q_{{\eta}_0} \subset \Omega.
        $$
\end{defn}
Similarly, we introduce the exterior pointwise $C^1$ condition for parabolic Lipschitz domains.
	\begin{defn}\label{defn:exteriorC1}
		Let $\Omega$ be a parabolic Lipschitz domain. We say that $\Omega$ satisfies the {\it exterior $C^1$ condition} at  $(x_0, t_0) \in \p_p\Omega$ with modulus of continuity $\omega$ if, after a spatial rotation and a translation that sends $(x_0, t_0)$ to the origin, there exists $\eta_0>0$ such that
		\[
        \left\{x_n < -\left(|x'| + |t|^{\frac{1}{2}}\right)\omega \left(|x'| + |t|^{\frac{1}{2}}\right)\right\}\cap  Q_{{\eta}_0} \cap\Omega = \emptyset.
        \]
	\end{defn}

	The following class of moduli of continuity plays a distinguished role in regularity theory.
	\begin{defn}\label{defn:Dini}
		Let $\omega$ be a modulus of continuity. We say that $\omega$ is {\it Dini continuous} if the integral
		$$
        \int_0^{\eta}\omega(s)\frac{\mathrm{d}{s}}{s} < \infty \, \, \, \text{for some} \, \, \, \eta>0.
        $$
	\end{defn}
	
   \begin{defn}
Let $\omega$ be a modulus of continuity. We say that $\omega$ satisfies the \emph{double Dini condition} if the integral
\[
\int_0^{\eta} \frac{\mathrm{d}\xi}{\xi} \int_0^{\xi} \omega(s)\frac{\mathrm{d}s}{s} < \infty \, \, \, \text{for some} \, \, \, \eta>0.
\]
\end{defn}

    We consider non-divergence form second-order operators with bounded measurable coefficients.
\begin{defn} \label{sec_operator}
We say that $\mathcal{L}$ is a non-divergence form elliptic operator if
\begin{equation}\label{eq:non-divergence_operator}
	\mathcal{L}u = \sum\limits_{i,j = 1}^na_{ij}(x, t)\p^2_{ij}u,
\end{equation}
where $A = (a_{ij})_{i,j=1}^n$ is a symmetric matrix with bounded measurable coefficients satisfying $\lambda I \leq A(x,t) \leq \Lambda I$ for some $0 < \lambda \leq \Lambda$. The constants $\lambda$ and $\Lambda$ are referred to as the {\it ellipticity constants}.
\end{defn}

The \emph{Pucci extremal operators} \cite{Puc66} are the following:
	$$
	\mathcal{M^-}(D^2u) := \inf\limits_{\lambda I \leq A \leq \Lambda I}\operatorname{Tr}(AD^2u), \quad \mathcal{M^+}(D^2u) := \sup\limits_{\lambda I \leq A \leq \Lambda I}\operatorname{Tr}(AD^2u).
	$$
Note that the Pucci operators depend on the same ellipticity constants $\lambda, \Lambda$ as the operator $\mathcal{L}$, and it holds that $\mathcal{M^-}u\leq \L u \leq \mathcal{M^+}u$. For further details and properties of the Pucci extremal operators, we refer the reader to \cite{CC95} and \cite{FR22}. 
    
    In what follows, we introduce the notion $L^{n+1}$-viscosity solutions, which will be used throughout the manuscript.

	\begin{defn}[\cite{CKS00}]\label{defn:Ln-viscosity}
		Let $u \in C(\Omega)$, $f \in L^{n+1}_{\mathrm{loc}}(\Omega)$, and $\mathcal{L}$ as in \eqref{eq:non-divergence_operator}. We say that $u$ is a {\it $L^{n+1}$-viscosity subsolution} (resp. {\it supersolution}) to 
        \[
        \p_tu - \L u = f \, \, \,  \text{in} \, \, \, \Omega
        \] if, for all ${\varphi \in W^{2,n+1}_{x,\,\mathrm{loc}}(\Omega)}\cap W^{1,n+1}_{t,\,\mathrm{loc}}$ such that $u - \varphi$ has a local maximum (resp. minimum) at $(x_0,t_0)$,
		$$\operatorname{ess}\liminf\limits_{x \rightarrow x_0, \, t \rightarrow t_0} \p_t\varphi - \mathcal{L}\varphi - f \leq 0$$
		$$(\text{resp.} \ \operatorname{ess}\limsup\limits_{x \rightarrow x_0, \, t \rightarrow t_0} \p_t \varphi - \mathcal{L}\varphi - f \geq 0).
        $$
		A function $u$ is said to be an {\it $L^{n+1}$-viscosity solution} if it is both a subsolution and a supersolution.
	\end{defn}

\section{Auxiliary results}\label{sect:aux}
In what follows, we state an auxiliary lemma that quantifies how the vertical distance from a point to the boundary is comparable to its parabolic distance, with the proportionality controlled by the Lipschitz constant $L$.
 
\begin{lem}\label{lem:dp_vs_dh}
		Let $\Omega$ be a parabolic Lipschitz domain in $Q_1$ with Lipschitz constant $L$. Then, for any $(x,t) \in \Omega$,
		$$
        \operatorname{d_p}((x,t),\p_\Gamma\Omega) \leq x_n - \Gamma(x',t) \leq \left(\sqrt{1+L^2} \right)\operatorname{d_p}((x,t),\p_\Gamma\Omega).
        $$
	\end{lem}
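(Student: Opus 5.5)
The lower bound is immediate from the definitions, and the upper bound comes from showing that the parabolic ball of radius $d$ around $(x,t)$ stays inside $\Omega$ for every $d < h/\sqrt{1+L^2}$. Throughout, fix $(x,t)\in\Omega$ and set $h := x_n - \Gamma(x',t) > 0$.

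For the first inequality I will use the point $(x',\Gamma(x',t),t)$. It lies on $\p_\Gamma\Omega$, and its parabolic distance to $(x,t)$ is exactly $|x_n-\Gamma(x',t)| = h$, because the spatial displacement is purely vertical and the time displacement is zero. Since $\operatorname{d_p}((x,t),\p_\Gamma\Omega)$ is an infimum over boundary points, this gives $\operatorname{d_p}\le h$ at once.

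For the second inequality, take an arbitrary point $(y,s)\in\p_\Gamma\Omega$, so that $y_n=\Gamma(y',s)$. I need $h\le\sqrt{1+L^2}\,\bigl(|x-y|+|t-s|^{1/2}\bigr)$. The Lipschitz bound $\|\Gamma\|_{C^{0,1}_p}\le L$ gives
\[
h = x_n - y_n + \Gamma(y',s)-\Gamma(x',t) \le |x_n-y_n| + L\bigl(|x'-y'|+|t-s|^{1/2}\bigr).
\]
By Cauchy--Schwarz, $|x_n-y_n|+L|x'-y'|\le\sqrt{1+L^2}\,\sqrt{|x_n-y_n|^2+|x'-y'|^2}=\sqrt{1+L^2}\,|x-y|$. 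The remaining term satisfies $L|t-s|^{1/2}\le\sqrt{1+L^2}\,|t-s|^{1/2}$. Adding these two bounds yields $h\le\sqrt{1+L^2}\,\operatorname{d_p}((x,t),(y,s))$, and taking the infimum over $(y,s)$ gives the claim.

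The only point needing care is that the boundary points $(y,s)$ must lie in the domain of $\Gamma$, namely $B_1'\times[-1,0]$, so that the Lipschitz estimate applies. This holds by definition of $\p_\Gamma\Omega\subset Q_1$. If the distance is meant to be taken over the closure, the bound follows by continuity of $\Gamma$. I therefore expect no real obstacle: the entire content of the lemma is the Cauchy--Schwarz step that combines the vertical and horizontal spatial displacements into the constant $\sqrt{1+L^2}$.
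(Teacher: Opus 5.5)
Your proof is correct and is essentially the paper's argument. The paper first slices in time, writing $\operatorname{d_p}$ as an infimum over $r$ of $r$ plus the fixed-time Euclidean distance at times $t\pm r^2$, bounds that distance below by the distance to a Lipschitz graph, and then uses $\Gamma(x',t\pm r^2)\le\Gamma(x',t)+Lr$ and $L\le\sqrt{1+L^2}$; these are exactly your estimates, applied in a different order, and your pointwise version makes explicit the Cauchy--Schwarz step that the paper's fixed-time graph-distance bound uses without proof.
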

	
	\begin{proof}
		The first inequality comes from the fact that $\operatorname{d_p}$ is a distance and the point $(x',\Gamma(x',t),t)$ belongs to $\p_\Gamma\Omega$. For the second inequality, note that
		\begin{align*}
			\operatorname{d_p}((x,t),\p_\Gamma\Omega) &= \inf\limits_{r \geq 0} \left\{r + \operatorname{d_x}(x,t+r^2,\p_\Gamma\Omega),r+\operatorname{d_x}(x,t-r^2,\p_\Gamma\Omega)\right\}\\
			&\geq \inf_{r \geq 0}\left\{r + \frac{x_n - \Gamma(x',t+r^2)}{\sqrt{1+L^2}},r + \frac{x_n - \Gamma(x',t-r^2)}{\sqrt{1+L^2}}\right\}\\[0.05cm]
			&\geq \inf_{r \geq 0}\left\{r + \frac{x_n - \Gamma(x',t) - Lr}{\sqrt{1+L^2}}\right\} = \frac{x_n - \Gamma(x',t)}{\sqrt{1+L^2}},
		\end{align*}
		where 
		$$\operatorname{d_x}((x,t),\p_\Gamma\Omega) := \operatorname{dist}\left(x,\{(y',\Gamma(y',t)\}\right)$$
		denotes the Euclidean distance in the $\mbox{x}$ direction at a fixed time.
	\end{proof}
    
	The following property of moduli of continuity will be used repeatedly.
	\begin{lem}[\protect{\cite[Lemma 2.10]{Tor26}}]\label{lem:exp_modulus_monotonicity}
		Let $\omega_1,\omega_2 : [0,{\theta}_0) \to [0,\infty)$ be moduli of continuity. Then, for every $a,c > 0$ and $b \in (0,{\theta}_0)$, there is $\tilde {\theta}_0 > 0$ such that $\tilde\omega : [0,\tilde {\theta}_0) \to [0,\infty)$ given by
		\[
        \tilde\omega({\theta}) := {\theta}\left(a+\int_{\theta}^b\omega_1(s)\frac{\mathrm{d}s}{s}\right)\exp \left(c\int_{\theta}^b\omega_2(s)\frac{\mathrm{d}s}{s} \right)
        \]
		is also a modulus of continuity.
	\end{lem}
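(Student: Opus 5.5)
We have to check the three defining properties of a modulus of continuity (Definition \ref{defn:modulus}) for $\tilde\omega$ on a suitably small interval $[0,\tilde\theta_0)$, setting $\tilde\omega(0):=0$. Write
$$F(\theta) := a+\int_\theta^b\omega_1(s)\frac{\mathrm{d}s}{s},\qquad G(\theta):=\exp\left(c\int_\theta^b\omega_2(s)\frac{\mathrm{d}s}{s}\right),$$
so that $\tilde\omega(\theta)=\theta F(\theta)G(\theta)$ for $\theta\in(0,b]$. Since $\omega_1,\omega_2$ are continuous, $F$ and $G$ are $C^1$ on $(0,b]$, positive, and nonincreasing. Hence $\tilde\omega$ is continuous on $(0,b]$, and only continuity at $0$ and strict monotonicity remain.

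\emph{Continuity at $0$.} Since $\omega_i$ is increasing, $\int_\theta^b\omega_i(s)\,\frac{\mathrm{d}s}{s}\le \omega_i(b)\log(b/\theta)$. Given $\varepsilon>0$, split the integral at a small $\delta$: the contribution of $(\theta,\delta)$ is at most $\omega_i(\delta)\log(\delta/\theta)$, and that of $(\delta,b)$ is a constant $K_\delta$. Then
$$\tilde\omega(\theta)\le \theta\bigl(a+K_\delta+\omega_1(\delta)\log(1/\theta)\bigr)\,e^{cK_\delta}\,(\delta/\theta)^{c\,\omega_2(\delta)}.$$
Choose $\delta$ so small that $c\,\omega_2(\delta)\le 1/2$. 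The right-hand side is then $O\bigl(\theta^{1/2}\log(1/\theta)\bigr)\to0$, so $\tilde\omega(\theta)\to0=\tilde\omega(0)$.

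\emph{Strict monotonicity.} This is the main point. For $\theta\in(0,b)$, the fundamental theorem of calculus gives $F'(\theta)=-\omega_1(\theta)/\theta$ and $G'(\theta)=-c\,\omega_2(\theta)G(\theta)/\theta$. Therefore
$$\tilde\omega'(\theta)=G(\theta)\Bigl[F(\theta)-\omega_1(\theta)-c\,\omega_2(\theta)F(\theta)\Bigr]=G(\theta)\Bigl[F(\theta)\bigl(1-c\,\omega_2(\theta)\bigr)-\omega_1(\theta)\Bigr].$$
Because $\omega_1(\theta),\omega_2(\theta)\to0$ as $\theta\to0$ while $F(\theta)\ge a>0$, we can choose $\tilde\theta_0\le b$ such that $c\,\omega_2(\theta)\le 1/2$ and $\omega_1(\theta)\le a/4$ on $(0,\tilde\theta_0)$. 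On this interval the bracket is at least $a/2-a/4>0$, and $G>0$, so $\tilde\omega'>0$. Hence $\tilde\omega$ is strictly increasing on $(0,\tilde\theta_0)$. Since $\tilde\omega>0$ there and $\tilde\omega(0)=0$, it is strictly increasing on $[0,\tilde\theta_0)$.

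The only delicate step is choosing $\tilde\theta_0$. It is needed because the factor $G$ may blow up as $\theta\to0$, and its decay competes with the linear factor $\theta$. Restricting to $\theta$ small, where the moduli are small, settles both the continuity at $0$ and the positivity of the derivative.
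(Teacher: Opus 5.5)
Your proof is correct: the identity $\tilde\omega'(\theta)=G(\theta)\bigl[F(\theta)\bigl(1-c\,\omega_2(\theta)\bigr)-\omega_1(\theta)\bigr]$, together with $F\ge a$ on $(0,b]$, gives strict monotonicity once $\tilde\theta_0\le b$ is small enough. Splitting the integrals at a scale $\delta$ with $c\,\omega_2(\delta)\le 1/2$ gives $\tilde\omega(\theta)=O\bigl(\theta^{1/2}\log(1/\theta)\bigr)\to 0$.

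The paper gives no proof of its own for this lemma; it is quoted from \cite[Lemma 2.10]{Tor26}. So I cannot check your route against a proof in this paper, but your direct verification is the natural one. One cosmetic point: the smallness of $\tilde\theta_0$ is needed only for monotonicity, not for the limit at $0$.
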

    
    In what follows, we introduce the interior Harnack inequality, which will be an important tool in proving our main results.
	\begin{thm}[\protect{\cite[Lemma 5.2]{CKS00}}]\label{thm:Harnack}
		Let $\L$ be as in \eqref{eq:non-divergence_operator}, and let $u$ be a nonnegative solution to $\p_t u - \L u = 0$ in $Q_1$. Then, 
		$$\sup\limits_{Q_{1/2}(0, -\frac{1}{2})} u \leq C\inf\limits_{Q_{1/2}} u,$$
		where the constant $C>0$ depends only on the dimension and ellipticity constants.
	\end{thm}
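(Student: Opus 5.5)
The plan is the Krylov--Safonov scheme in its parabolic form, carried out for $L^{n+1}$-viscosity solutions. Since the equation is linear in $u$ with measurable coefficients, it is enough to use that $u$ is simultaneously a viscosity supersolution of $\p_t u - \mathcal{M}^-(D^2u)\geq 0$ and a subsolution of $\p_t u-\mathcal{M}^+(D^2u)\le 0$. The Harnack inequality then splits into two one-sided estimates, which are combined by a covering argument:
\begin{itemize}
\item a \emph{weak Harnack inequality} for nonnegative supersolutions,
$$\Big(\average\int_{Q^-} u^\varepsilon\Big)^{1/\varepsilon}\le C\inf_{Q^+}u ;$$
\item a \emph{local maximum principle} for subsolutions, bounding $\sup$ on a smaller cylinder by $\big(\average\int u^\varepsilon\big)^{1/\varepsilon}$ on a larger one.
\end{itemize}
Here $Q^-$ is a cylinder lying strictly in the past of $Q^+$. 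In the statement $Q^-=Q_{1/2}(0,-\tfrac12)$ and $Q^+=Q_{1/2}$, and these are separated by the time gap $(-\tfrac12,-\tfrac14)$; this gap is exactly what the parabolic weak Harnack requires.

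The first tool is the parabolic Aleksandrov--Bakelman--Pucci--Krylov--Tso estimate, valid for $L^{n+1}$-viscosity supersolutions (the $W^{2,n+1}$ test functions are designed for this). It states that $\sup u^-$ is bounded by the boundary data plus $C\|f\|_{L^{n+1}}$ over the contact set of the parabolic monotone envelope. Combining it with an explicit barrier $\varphi$ gives the basic measure estimate. The barrier satisfies $\p_t\varphi-\mathcal{M}^-(D^2\varphi)\le \xi$, with $\xi$ supported in a small cylinder; it is $\le -2$ on $Q^+$ and $\ge 0$ on the parabolic boundary of the big cylinder. The conclusion is: if $u\ge0$ is a supersolution and $\inf_{Q^+}u\le 1$, then $|\{u\le M\}\cap Q^-|\ge \mu|Q^-|$ for universal $M$ and $\mu>0$. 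The barrier is built from functions like $(|x|^2+\dots)^{-p}$ times a time cutoff, with $p$ large depending on $\lambda,\Lambda,n$.

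Next, iterate the measure estimate to obtain power decay, $|\{u>M^k\}\cap Q^-|\le C(1-\mu)^k$. The iteration uses a parabolic Calderón--Zygmund decomposition, and this is the step I expect to be the main obstacle. The elliptic cube-decomposition argument fails as stated, because the information propagates only forward in time. I would use the ``stacked cubes'' or crawling-ink-spots lemma. For a dyadic parabolic cube $Q$ where $\{u>M^{k+1}\}$ has density greater than $1-\mu$, one considers the predecessor $\tilde Q$ shifted forward in time by an amount comparable to its size, on which $u>M^k$. One then shows that the union of these forward-shifted predecessors has measure at least $(1+c)$ times the bad set, up to a controlled error. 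The error comes from the part of the stacks exiting through the top of $Q^-$, and the time gap is chosen so that this error is absorbed. Once power decay is established, the $L^\varepsilon$ bound follows.

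For the local maximum principle, I would apply the weak Harnack inequality to $M_r-u$, where $M_r$ is the supremum over nested cylinders. A standard Moser/De Giorgi-type iteration on shrinking cylinders then bounds $\sup_{Q_{1/2}(0,-1/2)}u$ by $C\big(\average\int u^\varepsilon\big)^{1/\varepsilon}$ over a slightly larger cylinder, still in the past of $Q_{1/2}$. Chaining this with the weak Harnack inequality, after a finite covering and rescaling of cylinders to respect the time lag, yields $\sup_{Q_{1/2}(0,-1/2)}u\le C\inf_{Q_{1/2}}u$ with $C$ depending only on $n,\lambda,\Lambda$.
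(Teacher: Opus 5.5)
Your outline is the standard Krylov--Safonov--Tso scheme in Wang's parabolic form: ABP--Krylov--Tso, a barrier measure estimate, a stacked-cube covering, and the $L^\varepsilon$ weak Harnack combined with the local maximum principle. That is the argument behind \cite[Lemma 5.2]{CKS00}, and since the paper gives no proof of its own and simply quotes that result, your route is essentially the same.

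Two details need fixing. First, for $w=u+\varphi$ to be a supersolution up to $\xi$, the barrier must satisfy $\partial_t\varphi-\mathcal{M}^+(D^2\varphi)\ge-\xi$; the inequality you wrote is that condition for $-\varphi$. Second, with $A=\{u>M^{k+1}\}$ and $B=\{u>M^k\}$, a single forward translate of each predecessor only gives $|A|\le 2(1-\mu)|B|$, up to the boundary error, which is useless for small $\mu$. You need stacks of $m$ translates with $\frac{m+1}{m}(1-\mu)<1$, obtained by chaining the measure estimate along the stack at the cost of replacing $M$ by $M^m$.
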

	
	Then, we recall the Alexandrov-Bakelman-Pucci-Krylov-Tso estimate, originally proved in \cite{Kry76,Tso85}.
	\begin{thm}[\protect{\cite[Theorem 2.1]{CKS00}}]\label{thm:ABPKT}
		Let $\L$ be a non-divergence form operator as in \eqref{eq:non-divergence_operator} and let $u$ be a solution to $\p_t u - \L u = f$ in $Q_r$, with $f \in L^{n+1}(Q_r)$. Then,
		$$\sup\limits_{Q_r} u \leq \sup\limits_{\p_p Q_r} u^+ + Cr^{n/(n+1)}\|f\|_{L^{n+1}(Q_r)},$$
		where $C$ depends only on the dimension and the ellipticity constants.
	\end{thm}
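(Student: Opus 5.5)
By the scaling $(x,t)\mapsto(rx,r^2t)$ it suffices to take $r=1$. Indeed, $u_r(x,t)=u(rx,r^2t)$ solves the same type of equation with right-hand side $r^2f(rx,r^2t)$. Also $\|r^2f(r\cdot,r^2\cdot)\|_{L^{n+1}(Q_1)}=r^{2-(n+2)/(n+1)}\|f\|_{L^{n+1}(Q_r)}=r^{n/(n+1)}\|f\|_{L^{n+1}(Q_r)}$, which explains the power of $r$. Replacing $u$ by $u-\sup_{\p_pQ_1}u^+$, we may assume $u\le 0$ on $\p_pQ_1$. We may also replace $f$ by $f^+$, since $u$ is then a subsolution of $\p_tu-\mathcal M^+(D^2u)\le f^+$, using $\L u\le\mathcal M^+(D^2u)$. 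The goal becomes $M:=\sup_{Q_1}u\le C\|f^+\|_{L^{n+1}(\Gamma^+)}$, where $\Gamma^+$ is a suitable contact set. I would first carry this out for $u\in W^{2,n+1}_x\cap W^{1,n+1}_t$ (strong solutions). The $L^{n+1}$-viscosity case would then follow by the approximation/comparison arguments in \cite{CKS00}: sup-convolution in space-time, or comparison with strong solutions of Pucci equations with truncated right-hand sides.

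The core is Tso's parabolic version of the ABP argument. Extend $u^+$ by zero outside $Q_1$, in a slightly larger cylinder $B_2\times(-1,0]$ say. Define the \emph{upper monotone envelope} $\Gamma(x,t)$: the smallest function that is concave in $x$, nondecreasing in $t$, and lies above $u^+$ on $B_2\times(-1,t]$. Let $\Gamma^+=\{u=\Gamma\}\cap Q_1\cap\{u>0\}$ be the contact set, and consider Tso's map
$$
\Phi(x,t)=\bigl(\nabla_x\Gamma(x,t),\ \Gamma(x,t)-x\cdot\nabla_x\Gamma(x,t)\bigr)\in\R^{n+1}.
$$
Step 1: show that $\Phi(\Gamma^+)$ contains the set $\{(\xi,h):|\xi|\le M/4,\ M/2< h< M\}$, up to harmless constants. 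Take a supporting affine function $\ell(y)=\xi\cdot y+h$ in $x$ with $|\xi|$ small. Lower $h$ from $M$ and slide in time: define $t^*$ as the first time at which $u(\cdot,t)$ touches $\ell$ from below. By the boundary condition $u\le0$ and the smallness of $|\xi|$, the touching happens at an interior point of $Q_1$ with $u>0$. Step 2: at contact points, since $u_t\ge\Gamma_t\ge 0$ and $D^2u\le D^2_x\Gamma\le 0$, the Jacobian satisfies
$$
|\det D\Phi|=\Gamma_t\,\det(-D^2_x\Gamma)\le \Gamma_t\,\det(-D_x^2u)\le C\bigl(u_t+\lambda^{-1}\cdots\bigr),
$$
and by AM--GM, $\Gamma_t\det(-D^2u)\le \bigl(\tfrac{u_t-\mathcal M^+(D^2u)}{(n+1)\lambda^{n/(n+1)}}\bigr)^{n+1}\le C(f^+)^{n+1}$. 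This uses that at contact points $u_t-\mathcal M^+(D^2u)=u_t+\Lambda\cdot$(trace of negative part)$\ \ge u_t+\lambda\operatorname{Tr}(-D^2u)$, since $D^2u\le0$ there.

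Step 3: apply the area formula to the Lipschitz map $\Phi$, which is Lipschitz on $\Gamma^+$ because $\Gamma$ is $C^{1,1}$ in $x$ and Lipschitz in $t$ a.e. This gives
$$
cM^{n+1}\le|\Phi(\Gamma^+)|\le\int_{\Gamma^+}|\det D\Phi|\le C\int_{\Gamma^+}(f^+)^{n+1},
$$
hence $M\le C\|f\|_{L^{n+1}(Q_1)}$.

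The main obstacle is the regularity of the envelope $\Gamma$ needed to justify the area formula and the pointwise inequalities at contact points for merely $W^{2,n+1}$ functions. I would handle this by first proving everything for $u\in C^{2,1}$. For $W^{2,n+1}_x\cap W^{1,n+1}_t$ I would approximate by mollification: the $L^{n+1}$ error in $\p_tu-\L u$ is controlled, and the estimate passes to the limit since the constant does not depend on smoothness. The viscosity case is then handled by the reduction of \cite{CKS00} mentioned above.
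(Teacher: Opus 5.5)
Your outline is the standard Krylov--Tso proof: time-monotone concave envelope, the map $(\nabla_x u,\,u-x\cdot\nabla_x u)$ on the contact set with Jacobian $u_t\det D^2u$, AM--GM and the area formula, then approximation and the reduction of $L^{n+1}$-viscosity solutions to strong solutions of Pucci equations; this is the argument behind the result, which the paper does not prove but simply quotes from \cite{CKS00} (originally \cite{Kry76,Tso85}). Two small corrections: for $D^2u\le 0$ one has $\mathcal M^+(D^2u)=\lambda\operatorname{Tr}(D^2u)$ exactly (not $\Lambda$), and the area-formula step should rest not on regularity of $\Gamma$ (being $C^{1,1}$ in $x$ and Lipschitz in $t$ does not make $\nabla_x\Gamma$ Lipschitz in $t$) but on the fact that $\Phi=(\nabla_x u,\,u-x\cdot\nabla_x u)$ on the contact set, which is a $C^1$ map once you work directly with smooth mollified approximants rather than $C^{2,1}$ functions.
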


 The following interior gradient estimate will play an important role in the proofs of our results.
    \begin{thm}[\protect{\cite[Theorem 7.3 \& Remark 7.7]{CKS00}}]\label{thm:gradient_estimate}
        Let $\L$ be a non-divergence form operator as in \eqref{eq:non-divergence_operator}, and assume that its coefficients $a_{ij}(x,t)$ are continuous. Let $u$ be a solution to $\p_tu - \L u = 0$ in $Q_1$. Then,
        $$\|u\|_{C^{0,1}_p(Q_{1/2})} \leq C\|u\|_{L^\infty(\p_pQ_1)},$$
        where $C$ depends only on the dimension, ellipticity constants, and the modulus of continuity of the coefficients of $\L$.
    \end{thm}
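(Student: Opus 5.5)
The approach is a perturbation (approximation) argument in the spirit of Caffarelli, run in parabolic cylinders: compare $u$ at every scale with solutions of the frozen-coefficient equation, which are smooth, and iterate to obtain a pointwise $C^{1,\alpha}_p$ expansion at every point of $Q_{1/2}$. The parabolic Lipschitz bound then follows.

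First normalize. By Theorem~\ref{thm:ABPKT} with $f=0$, applied to $u$ and $-u$, we have $\|u\|_{L^\infty(Q_1)}\le \|u\|_{L^\infty(\p_pQ_1)}$, so we may assume $\|u\|_{L^\infty(Q_1)}\le 1$.

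\textbf{Approximation lemma.} Fix $(x_0,t_0)\in Q_{1/2}$ and translate it to the origin. Set $A_0:=A(x_0,t_0)$ and $\L_0:=\operatorname{Tr}(A_0D^2\,\cdot)$. The claim is: for every $\varepsilon>0$ there is $\delta>0$ such that if $|u|\le 1$ solves $\p_tu-\L u=0$ in $Q_1$ and $\|A-A_0\|_{L^{n+1}(Q_1)}\le\delta$, then some $h$ with $\p_th-\L_0h=0$ in $Q_{1/2}$ and $|h|\le 1$ satisfies $\|u-h\|_{L^\infty(Q_{1/2})}\le\varepsilon$.

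I would prove this by compactness. Krylov--Safonov Hölder estimates, which follow from Theorem~\ref{thm:Harnack}, together with the corresponding estimates near the parabolic boundary, give equicontinuity. The stability of $L^{n+1}$-viscosity solutions from \cite{CKS00} then passes the equation to the limit. A contradiction sequence would converge to a solution of a constant-coefficient equation, which is impossible.

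Since $A$ is continuous with modulus $\omega_A$, the smallness hypothesis holds in $Q_r(x_0,t_0)$, after parabolic rescaling, as soon as $\omega_A(r)\le \delta$. This fixes a scale $r_1$ depending only on $\omega_A$.

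\textbf{Iteration.} Constant-coefficient solutions satisfy $\|D^2_xh\|_{L^\infty(Q_{1/4})}+\|\p_th\|_{L^\infty(Q_{1/4})}\le C$. Hence there is an affine-in-$x$ function $\ell(x)=a+b\cdot x$ with $|h-\ell|\le C\rho^2$ on $Q_\rho$. Choose $\rho$ so small that $C\rho^2\le \tfrac12\rho^{1+\alpha}$, and then $\varepsilon=\tfrac12\rho^{1+\alpha}$. This gives $\|u-\ell\|_{L^\infty(Q_\rho)}\le \rho^{1+\alpha}$.

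Apply the same step to $(u-\ell_k)(\rho^kx,\rho^{2k}t)/\rho^{k(1+\alpha)}$. Affine-in-$x$ functions solve every equation of the form $\p_t-\L$, and the rescaled coefficients are still $\delta$-close to $A_0$. Induction then produces $\ell_k=a_k+b_k\cdot x$ with
\[
\|u-\ell_k\|_{L^\infty(Q_{\rho^k})}\le\rho^{k(1+\alpha)},\qquad |a_{k+1}-a_k|+\rho^k|b_{k+1}-b_k|\le C\rho^{k(1+\alpha)}.
\]
Consequently $b_k\to b$ with $|b|\le C$, and $|u-u(0,0)-b\cdot x|\le Cr^{1+\alpha}$ on $Q_r$. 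The $|b|\le C$ bound uses the normalization, with the constant absorbing the initial scale $r_1$.

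\textbf{Conclusion.} The expansion holds at every point of $Q_{1/2}$ with uniform constants, which gives $|\nabla_x u|\le C$ there. It also gives $|u(x,t)-u(x,s)|\le C|t-s|^{(1+\alpha)/2}\le C|t-s|^{1/2}$ for nearby times, via the expansion centered at the later time with $x$ fixed. Together with the spatial Lipschitz bound and a covering argument, this yields $\|u\|_{C^{0,1}_p(Q_{1/2})}\le C$.

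The main obstacle is the approximation lemma. It needs equicontinuity up to the parabolic boundary of the cylinder, so that the compactness limit keeps the boundary data, and it needs the $L^{n+1}$-viscosity stability theory to pass to the limit with merely measurable perturbations. I would import both directly from \cite{CKS00} rather than reprove them.
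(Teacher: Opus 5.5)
Your proposal is correct and follows the Caffarelli--Wang perturbation scheme (approximation by frozen-coefficient solutions, then iteration of affine-in-$x$ approximations at every point) that underlies the cited result \cite[Theorem 7.3 \& Remark 7.7]{CKS00}; the paper gives no proof of its own and simply imports that theorem. One adjustment: your compactness step needs only interior equicontinuity on an intermediate cylinder such as $Q_{3/4}$, not regularity up to $\p_p Q_1$, provided you let $h$ solve \emph{some} constant-coefficient equation with ellipticity constants $\lambda,\Lambda$ (the limit matrix of a contradiction sequence need not equal $A(x_0,t_0)$) — this is all the iteration uses, and it keeps $\delta$ uniform in the frozen matrix.
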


    Furthermore, for equations with a right-hand side in $L^{n+1}$, we will use the following endpoint regularity estimate, which is the parabolic counterpart of results by Teixeira \cite{Tei14} and Daskalopoulos-Kuusi-Mingione \cite{DKM14} and, to the best of our knowledge, does not appear in the literature.
    \begin{thm}\label{thm:interior_reg_modulus}
        Let $\L$ be a non-divergence form operator as in \eqref{eq:non-divergence_operator}, and assume that its coefficients are uniformly doubly Dini continuous. Let $u$ be a solution to
        $$\left\{
        \begin{array}{rclll}
        \p_t u - \L u & = & f & \text{in} & Q_1\\
        u & = & 0 & \text{on} & \p_p Q_1,
        \end{array}\right.$$
        where $f \in L^{n+1}$. Then, for every $(x,t), (y,s) \in Q_{1/4}$,
        $$|u(x,t) - u(y,s)| \leq Cd\int_d^4\omega_f(r)\frac{\mathrm{d}r}{r},$$
        where $d = \operatorname{d_p}((x,t),(y,s))$, $\omega_f(r) := \sup\limits_{x \in Q_1}r^{-1/(n+1)}\|f\|_{L^{n+1}(Q_r(x))}$, and $C$ is a positive constant that depends only on the dimension, ellipticity constants, and the modulus of continuity of the coefficients of $\L$.
    \end{thm}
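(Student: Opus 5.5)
We prove Theorem \ref{thm:interior_reg_modulus} by a Campanato-type iteration: at each dyadic scale we approximate $u$ by a solution of a constant-coefficient homogeneous equation, and we control the error with Theorem \ref{thm:ABPKT}. Fix $(x_0,t_0)\in Q_{1/4}$, set $r_k=4^{-k}$, and write $Q^k:=Q_{r_k}(x_0,t_0)$. On $Q^k$ let $h_k$ solve $\p_t h_k - \L_0^k h_k = 0$ with $h_k=u$ on $\p_pQ^k$, where $\L_0^k$ has the frozen coefficients $a_{ij}(x_0,t_0)$. Since $\L_0^k$ has constant coefficients, $h_k$ is smooth, and Theorem \ref{thm:gradient_estimate} together with its $C^{1,1}$ analogue gives
\[
\|h_k-\ell_k\|_{L^\infty(Q^{k+1})}\le C\,\frac{r_{k+1}^2}{r_k^2}\,\operatorname*{osc}_{Q^k}(u-\ell),
\]
where $\ell_k$ is the spatial affine part of $h_k$ at $(x_0,t_0)$ and $\ell$ is any spatial affine function. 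The difference $w=u-h_k$ vanishes on $\p_pQ^k$ and satisfies
\[
\p_t w-\L_0^k w=f+(\L-\L_0^k)u .
\]
Theorem \ref{thm:ABPKT} then bounds $\|w\|_{L^\infty(Q^k)}$ by
\[
C r_k^{n/(n+1)}\|f\|_{L^{n+1}(Q^k)}+C r_k^2\,\omega_a(r_k)\,\|D^2u\|_{\mathrm{avg}},
\]
and the first term is at most $C r_k\,\omega_f(r_k)$.

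The main obstacle is the coefficient term, because $D^2u$ is not controlled. I would first mollify $f$ and the coefficients, so that every quantity is finite, and prove estimates that are uniform in the mollification. Second, I would avoid $D^2u$ altogether by using a perturbation argument in viscosity form, as in Caffarelli's approach. For this I would use the $W^{2,n+1}$-type smallness of $(\L-\L_0^k)$ acting on $h_k$ rather than on $u$: the function $h_k$ solves $\p_t h_k-\L h_k=(\L_0^k-\L)h_k$, and $|D^2h_k|\le C\,\operatorname{osc}(u-\ell)/r_k^2$ in interior regions. This yields an error of size $\omega_a(r_k)\,\operatorname{osc}_{Q^k}(u-\ell)$. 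The same device takes care of the boundary layer near $\p_pQ^k$, by working on slightly shrunken cylinders and using a barrier.

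Set $\theta_k:=r_k^{-1}\inf_\ell\|u-\ell\|_{L^\infty(Q^k)}$. The two error estimates combine into the recursion
\[
\theta_{k+1}\le \tfrac12\theta_k + C\big(\omega_f(r_k)+\omega_a(r_k)\,M_k\big),
\]
where $M_k=\sum_{j\le k}\theta_j$ controls the gradient of the affine approximants. The gradients satisfy $|\nabla\ell_{k+1}-\nabla\ell_k|\le C\theta_k$, so $|\nabla \ell_k|\lesssim \sum_{j\le k}\omega_f(r_j)\approx\int_{r_k}^4\omega_f\,\frac{dr}{r}$. The double Dini condition on the coefficients makes
\[
\sum_k \omega_a(r_k)\int_{r_k}^4\omega_f\,\frac{dr}{r}\lesssim \sup\omega_f\sum_k \omega_a(r_k)\,k
\]
finite. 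Swapping the order of summation shows this is exactly where $\int_0\frac{d\xi}{\xi}\int_0^\xi\omega_a\,\frac{ds}{s}$ enters, and it gives $|\nabla\ell_k|\le C\int_{r_k}^4\omega_f\frac{dr}{r}$, $\theta_k\le C\int_{r_k}^4\omega_f\frac{dr}{r}$.

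To conclude, take two points at parabolic distance $d$ and choose $k$ with $r_k\sim d$. Then
\[
|u(x,t)-u(y,s)|\le |\ell_k(x)-\ell_k(y)| + 2r_k\theta_k\le C d\int_d^4\omega_f\,\frac{dr}{r}.
\]
The time direction is already included, since $\ell_k$ has no $t$-dependence and the sup bound is taken over the whole parabolic cylinder. Finally, $\|u\|_\infty\le C\|f\|_{L^{n+1}}$ by Theorem \ref{thm:ABPKT}, which handles the initial scale.
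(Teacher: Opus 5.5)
Your Campanato route differs from the paper's. The paper writes $u=\int Gf$ with the Green function, uses the Gaussian bounds on $G$, $\nabla_x G$, $\p_t G$ from \cite{ZC22} (this is where the double Dini hypothesis enters), and sums the contribution of $f$ over an explicit dyadic decomposition using H\"older's inequality. A perturbative iteration like yours can also prove the statement, but as written the perturbation step and the recursion do not fit together.

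\textbf{The error is not linear in $\omega_a$.} On a shrunken cylinder $Q_{(1-\delta)r_k}$ you only have $|D^2h_k|\lesssim \operatorname{osc}(u-\ell)/(\delta r_k)^2$. On its parabolic boundary, $u-h_k$ is controlled only through the H\"older moduli of $u$ and $h_k$ across a layer of width $\delta r_k$, and a barrier does not improve the modulus of $u$. So ABP gives $(\delta^{\alpha}+\omega_a(r_k)\delta^{-2})\operatorname{osc}(u-\ell)$, i.e.\ at best $\omega_a(r_k)^{\gamma}\operatorname{osc}(u-\ell)$ with $\gamma=\alpha/(2+\alpha)<1$. One way to get a linear error is interior $W^{2,n+1}$ estimates for $u$ itself, which is exactly the $D^2u$ control you set out to avoid.

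\textbf{The recursion then fails under the stated hypothesis.} In your recursion $\omega_a(r_k)$ multiplies $M_k=\sum_{j\le k}\theta_j$. With the sublinear error you would need $\sum_k k\,\omega_a(r_k)^{\gamma}<\infty$. This does not follow from double Dini continuity: for $\omega_a(s)=\log(1/s)^{-3}$ the double Dini sum converges but this one diverges.

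\textbf{The summation step is also flawed.} You use $\sup\omega_f$, which need not be finite for $f\in L^{n+1}$. Even when it is, an additive constant $C\sup\omega_f$ is not of the form $C\int_{r_k}^4\omega_f\frac{dr}{r}$ unless you restrict to scales $\ge r_k$.

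\textbf{The fix.} The observation that repairs all of this is that $\p_t-\L$ annihilates $x$-affine functions, so $M_k$ should not appear at all.
\begin{itemize}
\item Run the approximation on $u-\ell_k$, which solves the same equation with the same $f$. The coefficient error is then $\eta_k\, r_k\theta_k$, possibly with $\theta_{k-1}$ if you take the H\"older modulus from $Q^{k-1}$. Here $\eta_k=\omega_a(r_k)^{\gamma}$, or any quantity that is small once $r_0$ is small.
\item Choose $\rho=r_{k+1}/r_k$ small enough that the constant-coefficient $C^{1,1}$ estimate contracts by $\frac14$; the fixed ratio $\frac14$ need not suffice.
\item Then choose $r_0$ so small that $\rho^{-1}\eta_k\le\frac14$. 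This gives $\theta_{k+1}\le\frac12\max\{\theta_k,\theta_{k-1}\}+C\omega_f(r_k)$.
\item Hence $\sum_{j\le k}\theta_j\lesssim\theta_0+\sum_{j\le k}\omega_f(r_j)\lesssim\int_{r_k}^4\omega_f\frac{dr}{r}$. For the last inequality use $\omega_f(R)\ge (r/R)^{1/(n+1)}\omega_f(r)$ for $r<R$, together with $\theta_0\lesssim\|f\|_{L^{n+1}(Q_1)}$ from ABP.
\end{itemize}
After this, your final step goes through.

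\textbf{Comparison.} Done this way, your argument stays within the ABP/viscosity framework. It needs only a uniform modulus of continuity of the coefficients, as in Theorem~\ref{thm:gradient_estimate}, so it proves more than the stated theorem. The paper's kernel computation is more explicit and avoids any approximation lemma or boundary-layer analysis, at the cost of the heavier input from \cite{ZC22} and the double Dini assumption.
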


    We defer the proof of Theorem \ref{thm:interior_reg_modulus} to Appendix \ref{sect:app}.
    
	The following {\it almost positivity property} can be understood as a quantitative form of the maximum principle.	
	\begin{lem}\label{lem:almost_positivity}
		Let $\L$ be as in \eqref{eq:non-divergence_operator}, and let $\Omega$ be a parabolic Lipschitz domain in the sense of Definition \ref{defn:lip_domain} with Lipschitz constant $L \leq \frac{1}{16}$ and $0 \in \p_\Gamma\Omega$. Let $u$ satisfy
		\[
		\left\{\begin{array}{rclll}
			\p_t u - \L u & = & 0 & \text{in} & \Omega\cap Q_1\\
			u & \geq & \!-\mu_0 & \text{in} & \Omega\cap Q_1\\
			u & = & 0 & \text{on} & \p_\Gamma\Omega\cap Q_1\\
			u\left(\frac{e_n}{2}, -\frac{3}{4}\right)\hspace{-0.3em} & \geq & 1,
		\end{array}\right.
        \]
		 where the constant $\mu_0 > 0$ depends only on the dimension and ellipticity constants. Then, 
        \[
        u \geq 0 \quad \text{in} \ \Omega\cap Q_{1/2}.
		\]
       
	\end{lem}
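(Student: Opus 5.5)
Since $u\geq 0$ is not assumed, Harnack cannot be applied to $u$ directly. The plan is to split $u$ into a positive piece carrying the interior mass and a small negative piece controlled by $\mu_0$. Concretely, write $u = w - v$, where $v\ge0$ is the $\L$-caloric function in $\Omega\cap Q_1$ with boundary data $\mu_0\chi$ on $\overline{\Omega}\cap\p Q_1\cap\{t<0\}$ and $0$ on $\p_\Gamma\Omega$. Then $w=u+v$ solves the same equation, vanishes on $\p_\Gamma\Omega$, and satisfies $w\ge u+\mu_0\ge0$ on the rest of the parabolic boundary. By the maximum principle (Theorem \ref{thm:ABPKT} with $f=0$), $w\ge 0$ in $\Omega\cap Q_1$, and $0\le v\le\mu_0$.

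\textbf{Step 1: a lower bound for $w$.} Since $w=u+v\ge u$, we get $w(\frac{e_n}{2},-\frac34)\ge1$. Because $L\le\frac1{16}$, the domain contains a fat region $\{x_n>\frac{|x'|+|t|^{1/2}}{16}\}$, so Harnack chains are available. Chaining Theorem \ref{thm:Harnack} forward in time from $(\frac{e_n}{2},-\frac34)$ then gives $w\ge c_0>0$ on compact subsets well inside $\Omega\cap Q_{3/4}$ at later times. To reach points close to $\p_\Gamma\Omega$, I would compare $w$ with a positive barrier $\phi$ in $\Omega\cap Q_{3/4}$. The barrier satisfies $\p_t\phi-\L\phi\le0$, vanishes on $\p_\Gamma\Omega$, is bounded by $c_0$ on the interior patch where the lower bound is known, and satisfies $\phi\ge c_1\,(x_n-\Gamma(x',t))^{\gamma}$ for some $\gamma\ge1$. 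A candidate is a power of the regularized distance, or $\phi=(x_n-\Gamma)$-type functions in a Lipschitz cone; the small constant $L$ makes such cone barriers work. This yields
\[
w\ge c\,(x_n-\Gamma(x',t))^{\gamma}\quad\text{in }\Omega\cap Q_{1/2}.
\]

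\textbf{Step 2: an upper bound for $v$.} Next I would show $v\le C\mu_0\,(x_n-\Gamma(x',t))^{\gamma'}$ in $\Omega\cap Q_{1/2}$, with a boundary-Hölder/supersolution barrier built near the flat Lipschitz boundary, so that $v$ decays at $\p_\Gamma\Omega$. For the comparison in Step 3 to work we need $\gamma'\ge\gamma$.

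\textbf{Main obstacle.} The exponents are the difficulty: an arbitrary Lipschitz domain only gives $\gamma>1>\gamma'$, which points the wrong way. Here is what I would try first. For $L\le\frac1{16}$, exterior and interior cones of aperture close to $\pi/2$ give barriers $\rho^{1\pm\epsilon}$ homogeneous with respect to parabolic scaling. The interior one gives $w\gtrsim d^{1+\epsilon}$, while the exterior one only gives $v\lesssim\mu_0 d^{1-\epsilon}$, so the exponents do not match. To avoid matching exponents at all, I would instead use the boundary Harnack inequality, which the plan says is collected in Section \ref{sect:aux}. Applied to the nonnegative pair $v$ and $w$, both vanishing on $\p_\Gamma\Omega$, it gives
\[
\frac{v}{w}\le C\,\frac{v(\frac{e_n}{2},-\frac14)}{w(\frac{e_n}{2},-\frac34)}\le C\mu_0 \quad\text{in }\Omega\cap Q_{1/2}.
\]
Whether the time lag in boundary Harnack can be arranged in this order is the delicate point, which is why the data are normalized at the earlier time $-\frac34$.

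\textbf{Step 3: conclusion.} Choosing $\mu_0$ small enough that $C\mu_0\le1$ gives $v\le w$, and hence $u=w-v\ge0$ in $\Omega\cap Q_{1/2}$.
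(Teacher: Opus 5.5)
Your decomposition $u=w-v$ with $0\le v\le\mu_0$ and $w\ge0$ is sound, and you are right to abandon the barrier comparison of Steps 1--2. The gap is in Step 3. The obstruction is not the time lag; it is the normalization in Theorem \ref{thm:boundary_harnack}.

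That theorem does not give $v/w\le C\,v(A)/w(\frac{e_n}{2},-\frac{3}{4})$ for an arbitrary positive $w$. It asks for $\|\cdot\|_{L^\infty(Q_1)}\le 1$ on the numerator, which is harmless here: use $v/\mu_0$. (The point value $v(\frac{e_n}{2},-\frac{1}{4})$ you wrote sits at a time earlier than most of $Q_{1/2}$, so it could not bound $v$ from above in general anyway.) On the denominator, however, it asks for $\|w\|_{L^\infty(Q_1)}=1$ together with $w(\frac{e_n}{2},-\frac{3}{4})\ge m$, and the constant is $C=C(m)$.

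The lemma gives no upper bound on $u$, so $\|w\|_{L^\infty(Q_1)}$ is uncontrolled. After normalizing you only know $m=\|w\|_{L^\infty(Q_1)}^{-1}$, which can be arbitrarily small. As written, your argument proves $v\le C(\|w\|_{L^\infty})\,\mu_0\,w$, i.e.\ the lemma with $\mu_0$ depending on $\sup u$, which is not the statement. An un-normalized one-sided comparison would be a different and stronger result than Theorem \ref{thm:boundary_harnack}. In the approach of \cite{DS22,Tor24}, such comparisons are themselves derived from almost-positivity lemmas of exactly this type.

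To close the gap you need a bounded minorant of $w$ with a universal lower bound.

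\begin{enumerate}
\item Run the Harnack chain first (your Step 1, or as the paper does with $u+\mu_0\ge0$). This gives $w\ge c_\delta$ on $\{\operatorname{d_p}(\cdot,\p_p\Omega)>\delta\}\cap Q_{3/4}\cap\{t\ge-\frac{1}{2}\}$, with $c_\delta$ depending only on $\delta$, $n$, $\lambda$, $\Lambda$.
\item Fix a lateral boundary point $(x_0,t_0)$ near $\overline{Q_{1/2}}$ and a small $r$. Let $\hat w$ solve the equation in $\Omega\cap Q_r(x_0,t_0)$ with data $\min\{w,c_\delta\}$ on the outer parabolic boundary and $0$ on $\p_\Gamma\Omega$. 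Then $\hat w\le\min\{w,c_\delta\}$. Also $\hat w\ge c\,c_\delta$ at $(x_0+\frac{r}{2}e_n,t_0-\frac{3}{4}r^2)$, because its data equal $c_\delta$ on the bottom face away from $\p_\Gamma\Omega$.
\item Apply the rescaled Theorem \ref{thm:boundary_harnack} to $v/\mu_0$ and $\hat w/\|\hat w\|_{L^\infty}$. This gives $v\le Cc_\delta^{-1}\mu_0\,\hat w\le Cc_\delta^{-1}\mu_0\,w$ in $Q_{r/2}(x_0,t_0)$.
\item At distance $>\delta$ from $\p_\Gamma\Omega$ you have directly $v\le\mu_0<c_\delta\le w$. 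Covering $\Omega\cap Q_{1/2}$ and taking $\mu_0$ small finishes the proof.
\end{enumerate}

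For comparison, the paper runs the same Harnack chain but then applies De Silva--Savin's almost-positivity lemma \cite[Lemma 2.5]{DS22} directly to $2c^{-k}u$. That avoids boundary Harnack altogether. Your repaired route is legitimate here only because Theorem \ref{thm:boundary_harnack} is imported as a black box, and it is the heavier tool.
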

	
	\begin{proof}
		We will first prove that $u \geq 0$ in the segment $\{s e_n,\, s \in [0,\frac{1}{2}]\}.$
		
		Let $\delta \in (0,\frac{1}{8})$ to be chosen later and $(x, t) \in Q_{1/2} \cap \Omega_{\delta}$ an arbitrary point, where $\Omega_\delta := \{(x, t) \in Q_{1-\delta} : \operatorname{d_p}((x, t),\p_p\Omega) > \delta\}$.
		
		Now consider the segment $S$ joining $(\frac{e_n}{2},-\frac{3}{4})$ and $(x,t)$. We represent an arbitrary point in $S$ by $(y,s) = \left(\mu x + (1-\mu)\frac{e_n}{2},\mu t-(1-\mu)\frac{3}{4}\right)$, with $\mu \in [0,1]$. Then, we compute
		\begin{align*}
			y_n - \Gamma(y',s) &= \mu x_n + \frac{1-\mu}{2} - \Gamma\left(\mu x',\mu t - \frac{3(1-\mu)}{4}\right)\\
			&\geq \mu x_n + \frac{1-\mu}{2} - \mu\Gamma(x',t) - (1-\mu)\Gamma\left(0,-\frac{3}{4}\right)\\
			&\qquad - 2\mu(1-\mu)L\left(|x'|+\left|\frac{3}{4}+t\right|^{1/2}\right)\\
			&\geq \mu (x_n - \Gamma(x',t)) + (1-\mu)\left(\frac{1}{2} - \Gamma\left(0,-\frac{3}{4}\right)\right) - 4\mu(1-\mu)L\\
			&\geq \mu\delta + (1-\mu)\left(\frac{1}{2}-L\right) - 4\mu(1-\mu)L\\
			&= \mu\delta + (1-\mu)\left(\frac{1}{2} - L - 4\mu L\right) \geq \mu\delta + \frac{1-\mu}{8} > \delta.
		\end{align*}
		By Lemma \ref{lem:dp_vs_dh}, $\operatorname{d_p}(S,\p_\Gamma\Omega) > \delta/2$. Since $S \subset Q_{1-\delta}\cap\{t \geq -\frac{3}{4}\}$, also $\operatorname{d_p}(S,\p_p\Omega) > \delta/2$.
		
		Let now
		$$k := \left \lceil \frac{32}{\delta^2} \right \rceil,$$
		and consider a uniform partition $x_0 = x, x_1, \ldots, x_k = \frac{e_n}{2}; t_0 = t, t_1, \ldots, t_k = -\frac{3}{4}$. Let
		$$r := \sqrt{2\cdot\frac{\frac{3}{4}-|t|}{k}} \quad \Rightarrow \quad \frac{1}{\sqrt{k}} \leq r \leq \sqrt{\frac{2}{k}} \leq \frac{\delta}{4}.$$
		Then,
		$$|x_i - x_{i+1}| \leq \frac{1}{k} < \frac{r}{2} \quad \text{and} \quad t_i - t_{i+1} = \frac{r^2}{2}.$$
		Moreover, since $\operatorname{d_p}((x_i,t_i),\p_p\Omega) > \delta/2 \geq 2r$, $Q_{r}(x_i,t_i) \subset \Omega$.
		
		Let $v := u + \mu_0$, and note that it satisfies 
		$$\left\{\begin{array}{rclll}
			\p_t v - \L v & = & 0 & \text{in} & \Omega\cap Q_1\\
			v & \geq & 0 & \text{in} & \Omega.
		\end{array}\right.$$
		By the Harnack inequality, Theorem \ref{thm:Harnack}, we obtain
		$v(x_i,t_i) \geq cv(x_{i+1},t_{i+1})$ for some $c > 0$, and therefore 
		$$
		v(x,t) \geq c^kv\left(\frac{e_n}{2},-\frac{3}{4}\right) \geq c^k(1+\mu_0),$$
		using that $u(\frac{e_n}{2},-\frac{3}{4}) \geq 1$. Moreover, choosing $\mu_0$ smaller than $c^k/2$, we get that $u \geq c^k/2$ in $Q_{1/2}\cap\Omega_{\delta/2}$.
		
		Now, we define  $\tilde u = 2c^{-k} u$. Then $\tilde{u}$ solves $\p_t \tilde{u} - \L  \tilde{u}=0$, vanishes on $\p_p\Omega$, and satisfies
		$$\left\{\begin{array}{rclll}
			\tilde u & \geq & 1 & \text{in} & Q_{1/2} \cap \Omega_{\delta/2}\\
			\tilde u & \geq & -2c^{-k}\mu_0 & \text{in} & \Omega.
		\end{array}\right.$$
		Then, choosing $\delta$ and $\mu_0$ adequately small, by \cite[Lemma 2.5]{DS22} we conclude that $u \geq 0$ in the segment $\{se_n,\, s \in [0,\frac{1}{2}]\}$.
		
		Now, let $(x_0,t_0) \in \p_\Gamma\Omega\cap Q_{1/2}$ be an arbitrary point. By an analogous reasoning (choosing $\mu_0$ uniformly smaller if needed), $u \geq 0$ in $\{(x_0 + se_n,t_0),\, s \in [0,\frac{1}{2}]\}$, and we conclude that $u \geq 0$ in $\Omega\cap Q_{1/2}$.
	\end{proof}
	
	Finally, let us recall the parabolic boundary Harnack inequality. We refer the reader to \cite[Theorem 1.2]{Tor24} for a streamlined proof.
	\begin{thm}\label{thm:boundary_harnack}
		Let $\L$ be as in \eqref{eq:non-divergence_operator}, and let $\Omega$ be a parabolic Lipschitz domain in $Q_1$ in the sense of Definition \ref{defn:lip_domain} with Lipschitz constant $L \leq 1$ and $0 \in \p_p\Omega$. Let $m\in (0, 1]$, and let $u$ and $v$ be solutions\footnote{The original result in \cite{Tor24} is written for strong solutions, but when the right-hand side of the equation is zero, the proof also works for $L^{n+1}$-viscosity solutions, even in the case of bounded measurable coefficients.} to
		$$\left\{\begin{array}{rclll}
			\p_t u - \L u & = & 0 & \text{in} & \Omega\\
			u & = & 0 & \text{on} & \p_{\Gamma}\Omega.
		\end{array}\right.\quad\text{and}\quad\left\{\begin{array}{rclll}
			\p_t v - \L v & = & 0 & \text{in} & \Omega\\
			v & = & 0 & \text{on} &\p_{\Gamma}\Omega.
		\end{array}\right.$$
		such that $\| u\|_{L^{\infty} (Q_1)}\le 1$, $\| v\|_{L^{\infty} (Q_1)} = 1$, $v > 0$, and $v\left(\frac{e_n}{2}, -\frac{3}{4}\right) \ge m$. Then,
		$$
		\left\|\frac{u}{v}\right\|_{C^{0,\alpha}_p(\Omega \cap Q_{1/2})}  \leq C,
		$$
		where $C$ and $\alpha$ are a positive constants depending only on $m$, the dimension and ellipticity constants.
	\end{thm}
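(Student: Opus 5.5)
The plan follows the De Silva--Savin strategy: first compare $u$ with $v$ using Lemma \ref{lem:almost_positivity}, then iterate the comparison across dyadic scales to get decay of the oscillation of $u/v$. Throughout, $A_r^-(x_0,t_0):=(x_0+\frac r2 e_n,t_0-\frac34 r^2)$ denotes the "backward" interior point at scale $r$ and $A_r^+:=(x_0+\frac r2 e_n,t_0)$ the "forward" one. Lemma \ref{lem:almost_positivity} is stated for $L\le \frac1{16}$, whereas here $L\le 1$. I would first either redo that lemma with $\mu_0$ depending on $L$, or replace the cylinders $Q_r$ by boxes elongated in the $e_n$ direction. The Harnack chain argument in its proof goes through verbatim, with constants depending on $L$.

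\textbf{Step 1 (interior positivity and Carleson-type control).} Since $v(A_1^-)\ge m$ and $\|v\|_\infty=1$, a Harnack chain (Theorem \ref{thm:Harnack}) along paths moving backward in time gives $v\ge c(m,\delta)$ on $\{\operatorname{d_p}(\cdot,\p_p\Omega)>\delta\}\cap Q_{3/4}\cap\{t\le -\frac34+\ldots\}$. Combined with Lemma \ref{lem:almost_positivity} applied to $Kv\pm u$ for a large $K=K(m)$, this yields $|u|\le Kv$ in $\Omega\cap Q_{1/2}$. For $Kv\pm u$ the hypotheses hold: it is $\ge -1-\ldots$, which is small after dividing by a large multiple, and its value at $A_1^-$ is at least $Km-1\ge1$.

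\textbf{Step 2 (oscillation decay at a boundary point).} Fix $(x_0,t_0)\in\p_\Gamma\Omega\cap Q_{1/2}$. Suppose $a_k v\le u\le b_k v$ in $\Omega\cap Q_{r_k}(x_0,t_0)$, with $r_k=4^{-k}$. Both $w_1=u-a_kv$ and $w_2=b_kv-u$ are nonnegative solutions vanishing on $\p_\Gamma\Omega$, and $w_1+w_2=(b_k-a_k)v$. Hence one of them, say $w_1$, satisfies $w_1(A^-_{r_k/2})\ge \frac12(b_k-a_k)v(A^-_{r_k/2})$. Rescale to unit size and divide by $v(A^-_{r_k/2})$. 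Using the interior Harnack inequality and Step 1 at that scale, I get $w_1\ge \theta (b_k-a_k)v$ in $Q_{r_{k+1}}$. This gives $b_{k+1}-a_{k+1}\le(1-\theta)(b_k-a_k)$, so $u/v$ is $C^\alpha_p$ at boundary points with $\alpha=\log_4\frac1{1-\theta}$.

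\textbf{Main obstacle: scale-invariant normalization.} Step 2 is harmless only if the rescaled $v$ satisfies the hypotheses of Step 1 with a constant $m$ independent of $k$. That requires
\[
\sup_{\Omega\cap Q_r(x_0,t_0)} v\le C\, v(A_r^-(x_0,t_0)) .
\]
The inequality $\sup_{Q_r} v\lesssim v(A_{2r}^+)$ is the Carleson estimate, proved via Lemma \ref{lem:almost_positivity} and a Hölder decay of $v$ at $\p_\Gamma\Omega$. The passage back to the earlier time, $v(A^+_{r})\le C v(A^-_{r})$, is the backward Harnack inequality for solutions vanishing on the lateral boundary. This is where the global hypotheses $\|v\|_{L^\infty(Q_1)}=1$ and $v(\frac{e_n}{2},-\frac34)\ge m$ are used. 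I would prove it first: compare $v$ with the solution having data $v$ on the bottom of a slightly larger cylinder, and run an iteration on dyadic scales (as in Fabes--Garofalo--Salsa) in which any excess growth of $v$ near the boundary is ruled out by the almost positivity lemma. I expect this step to be the hardest.

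\textbf{Step 3 (interior points and conclusion).} For points at parabolic distance $d$ from $\p_\Gamma\Omega$, the interior Harnack inequality and interior Hölder estimates on $Q_{d/2}$ give $v\simeq v(A^-_d)$ there and $[u]_{C^\alpha_p}\lesssim d^{-\alpha}\sup|u|$. By Step 2, $u-\frac{u}{v}(x_0,t_0)\,v$ has size $\lesssim d^\alpha v$ on $Q_{d/2}$, so the quotient is $C^\alpha_p$ there with uniform constants. Patching the boundary and interior estimates in the usual way gives the bound on $\|u/v\|_{C^{0,\alpha}_p(\Omega\cap Q_{1/2})}$; the $L^\infty$ part of the norm comes from Step 1.
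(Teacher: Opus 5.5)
The paper does not prove this theorem; it imports it from \cite[Theorem 1.2]{Tor24}, which builds on \cite{DS22}. The parts you actually carry out are fine. The function $(Kv\pm u)/(Km-1)$ does satisfy the hypotheses of Lemma \ref{lem:almost_positivity} for $K=K(m)$ large, which gives $|u|\le Kv$ in $\Omega\cap Q_{1/2}$. Moreover, granted a bound
\[
\sup_{\Omega\cap Q_r(x_0,t_0)} v\le N\,v\big(A_r^-(x_0,t_0)\big)\qquad\text{for all }(x_0,t_0)\in\p_\Gamma\Omega\cap Q_{1/2},\ r\le r_0,
\]
with $N=N(m)$, your oscillation decay in Step 2 and the patching in Step 3 go through.

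This bound is exactly what you leave unproved, and it is not a technicality. It is a local backward Harnack inequality, which is the substantive content of any parabolic boundary Harnack principle. Without it, $\theta$ in Step 2 depends on $k$ through $\sup_{Q_{r_k}}v/v(A^-_{r_k/2})$, the product $\prod_k(1-\theta_k)$ need not tend to zero, and no H\"older modulus results. Step 3 needs it as well: because of the time lag, the interior Harnack inequality alone does not make $v$ comparable to a constant on $Q_{d/2}$.

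Your sketch for this step does not close the gap.
\begin{enumerate}
\item Theorem \ref{thm:Harnack} only propagates lower bounds forward in time. This also makes your sentence in Step 1 about chains ``moving backward in time'' wrong as written, although Step 1 does not need it.
\item Lemma \ref{lem:almost_positivity} takes a lower bound at the backward point $A^-$ as input. Using it to produce $v(A^+_r)\lesssim v(A^-_r)$ is therefore circular unless some other ingredient supplies that backward control.
\item Splitting off ``the solution with data $v$ on the bottom of a slightly larger cylinder'' does not isolate the difficulty. The remainder is fed by the values of $v$ on the lateral sides of that cylinder at all times up to $t_0$. The hypotheses do not even force any mass on the bottom: the solution with data $1$ on $\p Q_1\cap\{t>-1\}$ and $0$ on $\{t=-1\}\cup\p_\Gamma\Omega$ satisfies $v\le 1$ and $v(A_1^-)\ge m_0$, with $m_0>0$ universal.
\end{enumerate}
To finish your route you would have to show two things at every scale: that the side-fed part is dominated by $v(A_r^-)$ near $\p_\Gamma\Omega\cap Q_{1/2}$, and that the bottom-fed part satisfies a backward Harnack inequality in a time-dependent Lipschitz domain. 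That is the Fabes--Garofalo--Salsa / Fabes--Safonov--Yuan theory, not a routine dyadic iteration.

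As written, your proposal proves the statement only \emph{conditionally} on the scale-invariant normalization. To make it a proof you must either establish that normalization or import it, as the paper effectively does by citing \cite{Tor24}.
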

    
	\section{Barriers}\label{sect:cones}
	
	The goal of this section is to construct sub- and supersolutions of the form $d^{1\pm\varepsilon}$, where $d$ is a regularized distance, and $\varepsilon$ depends quantitatively on the oscillation of the boundary of the domain.
	
	We first introduce a \textit{locally regularized distance}. The advantage of this construction with respect to the more usual technique by Lieberman in \cite{Lie85} is that the behaviour of $d$ depends only on the boundary at a scale comparable to $d$, leading to improved pointwise bounds when the regularity of the boundary is not uniform.
	
	\begin{prop}\label{prop:regularized_distance}
		Let $\Omega$ be a parabolic Lipschitz domain in the sense of Definition \ref{defn:lip_domain} with Lipschitz constant $L \leq \frac{1}{C}$ and $0 \in \p_\Gamma\Omega$. Then, there exists a function $d : \Omega \cap Q_{1/2} \to \R$ satisfying the following:
		\begin{align*}
			1-C[\Gamma]_{C_p^{0, 1}(Q'_d(x',t))} &\leq \frac{d}{x_n - \Gamma(x', t)} \leq 1 + C[\Gamma]_{C_p^{0, 1}(Q'_d(x',t))},\\[0.1cm]
			1-C[\Gamma]_{C_p^{0, 1}(Q'_d(x',t))} &\leq |\nabla\!_x d| \leq 1 + C[\Gamma]_{C_p^{0, 1}(Q'_d(x',t))},\\[0.05cm]
			|\p_td|+|D^2d| &\leq \frac{C[\Gamma]_{C_p^{0, 1}(Q'_d(x',t))}}{d}.
		\end{align*}
		The constant $C>0$ depends only on the dimension.
	\end{prop}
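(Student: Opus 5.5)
We construct $d$ as a regularized distance whose mollification scale is tied to the distance itself, in the spirit of Lieberman's construction \cite{Lie85} but localized. Fix a smooth, nonnegative, even kernel $\varphi$ supported in $Q'_1$ (in the variables $(x',t)$) with $\int\varphi = 1$. For $(x,t)\in\Omega\cap Q_{1/2}$ and a parameter $s>0$, set
$$G(x',t,s) := \int \Gamma(x'-sy', t-s^2\tau)\,\varphi(y',\tau)\,\mathrm{d}y'\,\mathrm{d}\tau .$$
We then define $d(x,t)$ implicitly as the solution $s$ of
$$F(x,t,s) := x_n - G(x',t,\kappa s) - s = 0,$$
where $\kappa>0$ is a small dimensional constant, for instance $\kappa = 1/2$.

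The argument proceeds in four steps.

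\textbf{Step 1: existence and uniqueness.} The map $s \mapsto G(x',t,\kappa s)$ has derivative bounded by $C\kappa L$, because $\p_s$ falls on $\Gamma$ through $sy'$ and $s^2\tau$. The $s^2\tau$ term costs $|\p_t\Gamma|$ at scale $s$, which we control via the parabolic Lipschitz bound after first mollifying. To be careful, we write $G(x',t,s)-\Gamma(x',t)$ and differentiate the mollifier in $s$ by the usual trick: $\p_s$ of the rescaled kernel integrated against $\Gamma$. This gives $|\p_s G|\le C[\Gamma]_{C^{0,1}_p(Q'_s(x',t))}$. Hence $\p_s F \in [-1-CL,-1+CL]$, which is negative for small $L$. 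Moreover $F(x,t,0)=x_n-\Gamma(x',t)>0$ and $F\to-\infty$ as $s\to\infty$, so a unique root exists. The implicit function theorem then gives smoothness of $d$.

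\textbf{Step 2: comparison with the vertical distance.} Since $|G(x',t,\kappa d)-\Gamma(x',t)|\le C\kappa d\,[\Gamma]_{C^{0,1}_p(Q'_{\kappa d})}$, the defining equation gives $x_n-\Gamma(x',t) = d\,(1+O([\Gamma]_{C^{0,1}_p(Q'_d)}))$. This is the first estimate. The seminorm is only taken over $Q'_d(x',t)$ because the kernel at scale $\kappa d$ only sees that cylinder.

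\textbf{Step 3: derivative bounds.} We differentiate $F(x,t,d(x,t))=0$ implicitly:
$$\p_{x_n}d = \frac{1}{-\p_sF},\qquad \nabla\!_{x'}d = \frac{-\nabla\!_{x'}G}{-\p_sF},\qquad \p_t d = \frac{-\p_tG}{-\p_sF}.$$
Derivatives of $G$ are taken with derivatives on the kernel, since $\Gamma$ is only Lipschitz. First, $|\nabla\!_{x'}G|\le C[\Gamma]$, with the constant term of $\Gamma$ killed because $\int\nabla\varphi=0$. Second, $|\p_tG|\le C[\Gamma]/s$, because a time derivative costs $s^{-2}$ while the oscillation of $\Gamma$ in $Q'_s$ is at most $s[\Gamma]$. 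Likewise $|D^2_{x'}G|,|\p_s^2 G|,|\p_s\nabla\!_{x'}G|\le C[\Gamma]/s$. Together with $\p_sF=-1+O([\Gamma])$, these give $|\nabla\!_x d|=1+O([\Gamma])$ and $|\p_t d|\le C[\Gamma]/d$. Differentiating once more shows every second derivative of $d$ is a combination of second derivatives of $G$, all of size $O([\Gamma]/d)$, divided by powers of $\p_sF$. Here $\p^2_{x_n}d$ only involves $\p_s^2G\,(\p_{x_n}d)^2$.

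\textbf{Main obstacle.} The delicate point is keeping every constant local, i.e. bounding everything by the seminorm on $Q'_d(x',t)$ and not by the global $L$. This is exactly why the mollification scale is $\kappa d$ with $\kappa<1$. We use the global smallness of $L$ only to make $\p_s F$ uniformly negative and to make $d$ comparable to $x_n-\Gamma$. That comparability guarantees $Q'_{\kappa d}(x',t)\subset Q'_d(x',t)\subset B'_1\times(-1,0]$ for points in $Q_{1/2}$.

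A related care point concerns the time variable. The mollifier must be one-sided in time, supported in $\tau\in(0,1)$, so that it only sees past times and stays inside $Q'_d$ as the paper defines it. Then the estimate $|\p_s G|\le C[\Gamma]$ needs the parabolic scaling $(sy',s^2\tau)$, and I would verify it by computing $\p_s$ of $s^{-(n+1)}\varphi(\cdot/s,\cdot/s^2)$ and using $\int \p_s(\ldots)=0$.
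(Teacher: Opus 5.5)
Your proposal is correct and is essentially the paper's construction. The paper defines $d$ by inverting $P(x',x_n,t)=(x',\,x_n+(\eta_{x_n}*\Gamma)(x',t),\,t)$, which is exactly your implicit equation with $\kappa=1$, and it bounds every derivative by putting it on the parabolically rescaled, past-supported kernel and subtracting $\Gamma(x',t)$ via the zero-mean cancellations, just as you do. Your monotonicity-in-$s$ existence argument and your explicit proof of $d\simeq x_n-\Gamma(x',t)$ are a bit more detailed than the paper, which leaves both implicit; just drop the word ``even'' for the kernel, since you correctly require it to be one-sided in time.
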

	
	\begin{proof}
		We will construct $d$ as the inverse of a parametrization of $\Omega$.
        
		Let $\eta \in C^\infty_c(Q_1)$ be such that $\eta \geq 0$ and $\int\eta = 1$. Now, let
        $$P : \{(x',x_n,t) \in Q_1 : x_n > 0\} \to \Omega$$
        be defined as $P(x', x_n, t) = (x',p(x, t), t)$,	
		$$p(x, t) = (\eta_{x_n}*\Gamma)(x', t)+x_n := \int_{\R^{n-1}\times\R}x_n^{-(n+1)}\eta\left(\frac{z'}{x_n},\frac{s}{x_n^2}\right)\Gamma(x'+z',t+s)\mathrm{d}z'\mathrm{d}s+x_n.$$
		Then,
		\begin{align*}
			0 &= \p_n \left[\int_{\R^{n-1}\times\R}\eta_{x_n}\right]\\
			&= \int_{\R^{n-1}\times\R}-\frac{(n+1)}{x_n^{(n+2)}}\eta\left(\frac{z'}{x_n}, \frac{s}{x^2_n}\right)- \left(\frac{z'}{x_n^{(n+3)}} , \frac{2s}{x_n^{(n+4)}} \right)\cdot\nabla\eta\left(\frac{z'}{x_n}, \frac{s}{x^2_n}\right),
		\end{align*}
		and hence
		
		\begin{align*}
			\p_n p(x,t) &= \int_{\R^{n-1}\times\R}\p_n\left(x_n^{-(n+1)}\eta\left(\frac{z'}{x_n},\frac{s}{x_n^2}\right)\right)      \Gamma(x'+z', t+s)\mathrm{d}z'\mathrm{d}s + 1\\[0.1cm]
			&= \int_{\R^{n-1}\times \R}\p_n\left(x_n^{-(n+1)}\eta\left(\frac{z'}{x_n},\frac{s}{x_n^2}\right)\right) \left({\Gamma(x'+z', t+s)-\Gamma(x',t)}\right)\mathrm{d}z' \mathrm{d}s + 1\\[0.1cm]
			&\le \int_{\R^{n-1}\times \R}\left|\p_n\left(x_n^{-(n+1)}\eta\left(\frac{z'}{x_n},\frac{s}{x_n^2}\right)\right)\right| \left|{\Gamma(x'+z', t+s)-\Gamma(x',t)}\right|\mathrm{d}z' \mathrm{d}s + 1\\[0.1cm]
			&\le \int_{\R^{n-1}\times \R} \left|\p_n\left(x_n^{-(n+1)}\eta\left(\frac{z'}{x_n},\frac{s}{x_n^2}\right)\right)\right| [\Gamma]_{C_p^{0, 1}(Q'_{x_n}(x',t))}(|z'|+|s|^{1/2})\mathrm{d}z' \mathrm{d}s + 1\\[0.1cm]
			&\le C[\Gamma]_{C_p^{0, 1}(Q'_{x_n}(x',t))} + 1,
		\end{align*}
		where in the last inequality we used that
		\begin{align*}
			&\int_{\R^{n-1}\times \R} \left|\p_n\left(x_n^{-(n+1)}\eta\left(\frac{z'}{x_n},\frac{s}{x_n^2}\right)\right)\right| (|z'|+|s|^{1/2})\mathrm{d}z' \mathrm{d}s\\[0.1cm]
			&\qquad \lesssim \int_{\R^{n-1}\times \R} \left|x_n^{-(n+2)}\eta\left(\frac{z'}{x_n},\frac{s}{x_n^2}\right) - \left(\frac{z'}{x_n^{(n+2)}},\frac{2s}{x_n^{(n+3)}}\right)\cdot\nabla\eta\left(\frac{z'}{x_n},\frac{s}{x_n^2}\right)\right| x_n\mathrm{d}z' \mathrm{d}s\\[0.1cm]
			&\qquad \lesssim \int_{\R^{n-1}\times \R}x_n^{-(n+1)}\eta\left(\frac{z'}{x_n},\frac{s}{x_n^2}\right) + x_n^{-(n+1)}\left|\nabla\eta\left(\frac{z'}{x_n},\frac{s}{x_n^2}\right)\right|\mathrm{d}z' \mathrm{d}s \leq C.
		\end{align*}    
		
		Analogously, we can bound the integral below in the same way to obtain
		$$\p_n p(x,t) \geq 1 - C [\Gamma]_{C_p^{0, 1}(Q'_{x_n}(x',t))}.$$
		In summary,
		$$1 - C [\Gamma]_{C_p^{0, 1}(Q'_{x_n}(x',t))} \leq \p_n p(x, t) \leq 1 + C[\Gamma]_{C_p^{0, 1}(Q'_{x_n}(x',t))}.$$
		
		Hence, since $\operatorname{det}DP = \p_n p(x, t) > 0$, we can define $(y',d(y, t), t) = P^{-1}(y',y_n, t)$.
		
		Now, to estimate the derivatives of $d$, we first estimate the derivatives of $p$:
		\begin{itemize}

            \item For the time derivative, we first note that $\p_t$ acts on $\Gamma(x'+z', t+s)$, and since $\p_t\Gamma(x'+z',t+s) = \p_s\Gamma(x'+z',t+s)$, integrating by parts in $s$ gives
            \[
            \p_t p = -\int_{\R^{n-1} \times \R} x_n^{-(n+3)} \p_t\eta \left(\frac{z'}{x_n}, \frac{s}{x^2_n} \right)  \Gamma \left(x' + z' , t+s \right) \mathrm{d}z' \mathrm{d}s.
            \]
            Since $\int \p_t\eta = 0$, we can subtract $\Gamma(x',t)$ from $\Gamma(x'+z',t+s)$ inside the integral, and then
            \begin{align*}
            |\p_t p| & = \left| \displaystyle\int_{\R^{n-1} \times \R} x_n^{-(n+3)} \p_t\eta \left(\frac{z'}{x_n}, \frac{s}{x^2_n} \right)  \left(\Gamma \left(x' + z' , t+s \right) - \Gamma(x',t) \right)\mathrm{d}z' \mathrm{d}s \right|  \\[0.2cm]
            & \le \frac{C[\Gamma]_{C_p^{0, 1}(Q'_{x_n}(x',t))}}{x_n}.
            \end{align*}
            
			\item For $i = 1,\ldots,n-1$,
			
			$$\left|\p_i p \right| = \left|\int_{\R^{n-1} \times \R}x_n^{-(n+1)}\eta\left(\frac{z'}{x_n}, \frac{s}{x^2_n}\right) \p_i \Gamma(x'+z', t +s)\mathrm{d}z' \mathrm{d}s\right| \leq [\Gamma]_{C_p^{0, 1}(Q'_{x_n}(x',t))}.
			$$
			\item For $i,j = 1,\ldots,n-1$, integrating by parts,
			
			$$|\p^2_{ij}p| = \left|\int_{\R^{n-1} \times \R}x_n^{-(n+2)}\p_i\eta\left(\frac{z'}{x_n} , \frac{s}{x^2_n}\right)\p_j\Gamma(x'+z', t + s)\mathrm{d}z' \mathrm{d}s\right| \leq \frac{C[\Gamma]_{C_p^{0, 1}(Q'_{x_n}(x',t))}}{x_n}.$$

            \newpage
			\item For $i = 1,\ldots,n-1$,
			
			\begin{align*}
				|\p^2_{in}p| &= \left|\int_{\R^{n-1}\times \R}\left(-\frac{(n+1)}{x_n^{(n+2)}}\eta\left(\frac{z'}{x_n}, \frac{s}{x^2_n}\right) \right.\right.\\[0.1cm] 
				&\qquad\left.\left. -\, \left(\frac{z'}{x_n^{(n+3)}} , \frac{2s}{x_n^{(n+4)}} \right)\cdot\nabla\eta\left(\frac{z'}{x_n}, \frac{s}{x^2_n}\right)\right)
				\p_i\Gamma(x'+z', t+s)\mathrm{d}z' \mathrm{d}s\right|\\[0.1cm]
				&\leq \frac{C[\Gamma]_{C_p^{0, 1}(Q'_{x_n}(x',t))}}{x_n}.
			\end{align*}
			
			\item Finally,
			
			\begin{align*}
				|\p^2_{nn}p| &= \left|\int_{\R^{n-1}\times \R}\left(\frac{(n+1)(n+2)}{x_n^{(n+3)}}\eta\, \left(\frac{z'}{x_n},\frac{s}{x^2_n}\right)\right.\right.\\[0.1cm]
				&\left.\left.\qquad + \, \left( \frac{(2n+4)z'}{x_n^{(n+4)}}, \frac{(4n+10)s}{x_n^{(n+5)}}\right) \cdot\nabla\eta\left(\frac{z'}{x_n}, \frac{s}{x^2_n}\right)\right.\right.\\[0.1cm]
				&\left.\left.\qquad +\, \frac{1}{x_n^{(n+1)}}\left(\frac{z'}{x_n^{2}}, \frac{2s}{x_n^{3}}\right)^\top\cdot \! D^2\eta\left(\frac{z}{x_n}, \frac{s}{x^2_n}\right)\cdot \left(\frac{z'}{x_n^{2}},  \frac{2s}{x_n^{3}}\right)\right)\Gamma(x'+z', t+s)\mathrm{d}z'\mathrm{d}s\right|\\[0.1cm]
				&\leq \frac{C[\Gamma]_{C_p^{0, 1}(Q'_{x_n}(x',t))}}{x_n}.
			\end{align*}
		\end{itemize}
		
		The first derivatives of $d$ can be estimated as:
		\begin{itemize}
			\item For the time derivative
			\[
			|\p_t d| = \left|\frac{\p_tp}{\p_np}\right| \leq \frac{C[\Gamma]_{C_p^{0, 1}(Q'_{x_n}(x',t))}}{x_n}.
			\]
			\item For the last component,
			$$|\p_nd| = \frac{1}{|\p_np|} \leq 1+C[\Gamma]_{C_p^{0, 1}(Q'_{x_n}(x',t))}.$$
			\item For $i = 1,\ldots,n-1$, 
			$$|\p_id| = \left|\frac{\p_ip}{\p_np}\right| \leq C[\Gamma]_{C_p^{0, 1}(Q'_{x_n}(x',t))}.
			$$
		\end{itemize}
		
		To compute $D^2d$, we distinguish three cases:
		\begin{itemize}
			\item When $i, j = 1,\ldots,n-1$,
			$$|\p^2_{ij}d| \leq \frac{1}{|\p_np|}\left(|\p^2_{ij}p|+\frac{|\p^2_{in}p\hspace{0.1em}\p_jp|+|\p^2_{nj}p\hspace{0.1em}\p_ip|}{|\p_np|}+\frac{|\p^2_{nn}p\hspace{0.1em}\p_ip\hspace{0.1em}\p_jp|}{|\p_np|^2}\right) \leq \frac{C[\Gamma]_{C_p^{0, 1}(Q'_{x_n}(x',t))}}{x_n}.$$
			\item When $i = 1,\ldots,n-1$,
			$$|\p^2_{in}d| \leq \frac{1}{|\p_np|^2}\left(|\p_{in}p|+\frac{|\p^2_{nn}p\hspace{0.1em}\p_ip|}{|\p_np|}\right) \leq \frac{C[\Gamma]_{C_p^{0, 1}(Q'_{x_n}(x',t))}}{x_n}.$$
			\item Finally,
			$$|\p^2_{nn}d| = \frac{|\p^2_{nn}p|}{|\p_np|^3} \leq \frac{C[\Gamma]_{C_p^{0, 1}(Q'_{x_n}(x',t))}}{x_n}.$$
		\end{itemize}
	\end{proof}
	
	Using the previous estimates, we see that $d^{1+\varepsilon}$ is a subsolution and $d^{1-\varepsilon}$, a supersolution.
	\begin{lem}\label{lem:barriers}
		For any parabolic Lipschitz domain $\Omega$ with Lipschitz constant $L \leq \frac{1}{C_0}$, and for any $\varepsilon \geq C_0[\Gamma]_{C_p^{0, 1}(Q'_{2r}(x',t))},$
		$$
		\p_t d^{1+\varepsilon} - \mathcal M^-d^{1+\varepsilon} \leq 0 \quad \text{and} \quad \p_t d^{1-\varepsilon} - \mathcal M^+d^{1-\varepsilon} \geq 0 \quad \text{in} \ \Omega\cap Q_r,
		$$
		where a positive constant $C_0$ depends only on the dimension and ellipticity constants.
	\end{lem}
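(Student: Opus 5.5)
Direct computation using the regularized distance $d$ of Proposition \ref{prop:regularized_distance}; the point is to check that all geometric quantities are evaluated on $Q'_d(x',t)\subset Q'_{2r}(x',t)$. Fix $(x,t)\in\Omega\cap Q_r$ and abbreviate $\theta := [\Gamma]_{C_p^{0,1}(Q'_{2r}(x',t))}$, so that $\varepsilon \geq C_0\theta$. For $(x,t)$ near the boundary, $d$ is comparable to $x_n-\Gamma(x',t)\lesssim r$, hence $Q'_d(x',t)\subset Q'_{2r}(x',t)$. Here I take $r$ in the range where this inclusion holds (say $r\le 1/4$, with the constant $2$ absorbing the Lipschitz factor, using $L\le 1/C_0$). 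Then every bound in Proposition \ref{prop:regularized_distance} holds at $(x,t)$ with $\theta$ in place of the local seminorm. (If the proposition is read with the seminorm taken over $Q'_{x_n}$ as in its proof, the same inclusion argument applies.)

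Write $\beta = 1\pm\varepsilon$, restricting to $\varepsilon\le 1/2$, which can be arranged since $C_0\theta\le C_0L$ is small. A direct computation gives
\[
\p_t d^{\beta} = \beta d^{\beta-1}\p_t d,\qquad D^2 d^{\beta} = \beta d^{\beta-1}D^2 d + \beta(\beta-1)d^{\beta-2}\nabla\!_x d\otimes\nabla\!_x d .
\]

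\emph{Subsolution, $\beta = 1+\varepsilon$.} The rank-one term is positive semidefinite with coefficient $\beta(\beta-1)=(1+\varepsilon)\varepsilon>0$. Superadditivity of $\mathcal M^-$ and $\mathcal M^-(D^2d)\ge -n\Lambda|D^2d|$ give
\[
\mathcal M^-(D^2d^{\beta}) \ge \beta d^{\beta-2}\big(\varepsilon\lambda|\nabla\!_x d|^2 - n\Lambda d|D^2 d|\big).
\]
By the proposition, $|\nabla\!_x d|^2\ge (1-C\theta)^2\ge 1/2$ and $d|D^2d|+d|\p_t d|\le C\theta$. Therefore
\[
\p_t d^\beta - \mathcal M^- d^\beta \le \beta d^{\beta-2}\big(C\theta + n\Lambda C\theta - \tfrac{\lambda}{2}\varepsilon\big)\le 0
\]
as soon as $\varepsilon\ge C_0\theta$ with $C_0 := 2C(1+n\Lambda)/\lambda$.

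\emph{Supersolution, $\beta = 1-\varepsilon$.} The rank-one term is now negative semidefinite with coefficient $-(1-\varepsilon)\varepsilon$. Subadditivity of $\mathcal M^+$ gives
\[
\mathcal M^+(D^2d^\beta)\le \beta d^{\beta-2}\big(n\Lambda d|D^2d| - \varepsilon\lambda|\nabla\!_x d|^2\big),
\]
and the same inequality chain yields $\p_t d^\beta-\mathcal M^+d^\beta\ge 0$.

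The computations hold classically since $d$ is smooth in $\Omega$, being the inverse of a smooth map. $L^{n+1}$-viscosity is then automatic.

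The main obstacle is not the algebra but the bookkeeping: controlling the constants (requiring $L\le 1/C_0$ so that $|\nabla\!_x d|$ stays bounded below and $\varepsilon\le 1/2$), and justifying that the seminorm at scale $d(x,t)$ is dominated by the seminorm on $Q'_{2r}(x',t)$. This requires $d\le 2r$ in $Q_r$, which follows from $d\le(1+C\theta)(x_n-\Gamma)$ and $x_n-\Gamma(x',t)\le r+Lr$.
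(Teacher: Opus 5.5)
Your proof is correct and is essentially the paper's argument: you expand $D^2 d^{1\pm\varepsilon}$, isolate the rank-one term $\pm\varepsilon(1\pm\varepsilon)d^{-1\pm\varepsilon}\nabla\!_x d\otimes\nabla\!_x d$ using super/subadditivity of the Pucci operators, and absorb the $\p_t d$ and $D^2 d$ contributions, which are $O([\Gamma]_{C^{0,1}_p}/d)$ by Proposition \ref{prop:regularized_distance}, into $\lambda\varepsilon|\nabla\!_x d|^2$. Your extra bookkeeping fills in points the paper leaves implicit: $d\le 2r$, so that $Q'_d(x',t)\subset Q'_{2r}(x',t)$, and $\varepsilon<1$ in the supersolution case. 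The latter is a genuine restriction on the statement rather than something that ``can be arranged'', since for $\varepsilon>1$ the rank-one term of $D^2 d^{1-\varepsilon}$ becomes positive semidefinite and the supersolution inequality fails; it is harmless, though, because the lemma is only applied with $\varepsilon=C_0[\Gamma]_{C^{0,1}_p}$.
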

	
	\begin{proof}
		It follows from the definition of the Pucci operator combined with Proposition \ref{prop:regularized_distance},
		\begin{align*}
			\p_t d^{1+\varepsilon}  -  \mathcal M^-d^{1+\varepsilon} &= \p_t d^{1+\varepsilon} 
			- \inf\limits_{\lambda I \leq A \leq \Lambda I}\operatorname{Tr}(AD^2d^{1+\varepsilon})\\
			&= (1+\varepsilon)\left( d^{\varepsilon} \p_t d
			- \inf\limits_{\lambda I \leq A \leq \Lambda I}\left[\sum\limits_{i=1}^n\sum\limits_{j=1}^na_{ij}(d^\varepsilon\p^2_{ij}d + \varepsilon d^{-1+\varepsilon}\p_id\p_jd)\right]\right)\\
			&\leq (1+\varepsilon)\left( d^{\varepsilon} \p_t d
			- \left(d^\varepsilon\mathcal M^-d+\varepsilon d^{-1+\varepsilon}\inf\limits_{\lambda I \leq A \leq \Lambda I}\nabla\!_x d^\top A \nabla\!_x d\right) \right)\\
			&\leq (1+\varepsilon)\left(C\Lambda [\Gamma]_{C_p^{0, 1}(Q'_d(x',t))}-\frac{\lambda\varepsilon}{4}\right)d^{-1+\varepsilon} \leq 0.
		\end{align*}
		Note that in the last inequality, we use that $[\Gamma]_{C_p^{0, 1}(Q'_d(x',t))}$ is small. The inequality for $d^{1-\varepsilon}$ is analogous.
	\end{proof}
	
	Then, using these barriers we construct a solution with a controlled growth.
	\begin{prop}\label{prop:special_solns}
		Let $\L$ be as in \eqref{eq:non-divergence_operator}, and let $\Omega$ be a parabolic Lipschitz domain in the sense of Definition \ref{defn:lip_domain} with Lipschitz constant $L \leq L_0$. Then, for every $r \in (0,\frac{1}{3})$, there exists a solution $\varphi_r$ to
		$$\left\{\begin{array}{rclll}
			\p_t\varphi_r - \L\varphi_r & = & 0 & \text{in} & \Omega\cap Q_r\\
			\varphi_r & = & 0 & \text{on} & \p_\Gamma\Omega\cap Q_r
		\end{array}\right.$$
		with the growth estimates
		$$(2r)^{-\varepsilon}d^{1+\varepsilon} \leq \varphi_r \leq (2r)^\varepsilon d^{1-\varepsilon} \quad \text{in} \ \Omega\cap Q_r,$$
		and
		$$\|\varphi_r - d\|_{L^\infty(\Omega\cap Q_r)} \leq Kr[\Gamma]_{C_p^{0, 1}(Q'_{2r})},$$
		where $\varepsilon = C_0[\Gamma]_{C_p^{0, 1}(Q'_{2r})}$, and $C_0 >0$ is from Lemma \ref{lem:barriers}. The positive constants $L_0$ and $K$ depend only on the dimension and ellipticity constants.
	\end{prop}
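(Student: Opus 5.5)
Let $\varphi_r$ be the solution of the Dirichlet problem in $\Omega\cap Q_r$ with boundary data equal to $d$ on the whole parabolic boundary. That is,
$$\p_t\varphi_r - \L\varphi_r = 0 \ \text{in} \ \Omega\cap Q_r, \qquad \varphi_r = d \ \text{on} \ \p_p(\Omega\cap Q_r).$$
Since $d$ extends continuously by $0$ to $\p_\Gamma\Omega$, this gives $\varphi_r=0$ on $\p_\Gamma\Omega\cap Q_r$. Existence of an $L^{n+1}$-viscosity solution is standard from \cite{CKS00}: Lipschitz domains satisfy a uniform exterior cone condition, so the boundary data are attained continuously.

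\textbf{Growth estimates.} Set $\varepsilon = C_0[\Gamma]_{C_p^{0,1}(Q'_{2r})}$. This dominates $C_0[\Gamma]_{C_p^{0,1}(Q'_d(x',t))}$ for $(x,t)\in\Omega\cap Q_r$, because $d\lesssim r$ there. Hence Lemma \ref{lem:barriers} applies in $\Omega\cap Q_r$, provided $L_0$ is small enough that $d\le 2r$ and that the hypotheses of Proposition \ref{prop:regularized_distance} hold. Consider
$$\underline\psi := (2r)^{-\varepsilon}d^{1+\varepsilon}, \qquad \overline\psi:=(2r)^{\varepsilon}d^{1-\varepsilon}.$$
By Lemma \ref{lem:barriers}, $\p_t\underline\psi-\L\underline\psi\le \p_t\underline\psi-\mathcal M^-\underline\psi\le0$, and similarly $\overline\psi$ is a supersolution. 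On the parabolic boundary, $d\le 2r$ gives $\underline\psi = d\,(d/2r)^{\varepsilon}\le d$ and $\overline\psi = d\,(2r/d)^\varepsilon\ge d$. All three functions vanish on $\p_\Gamma\Omega$. The comparison principle (ABP, Theorem \ref{thm:ABPKT}, applied to the differences) then gives $\underline\psi\le\varphi_r\le\overline\psi$.

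\textbf{Closeness to $d$.} The estimate $\|\varphi_r-d\|_{L^\infty}\le Kr[\Gamma]_{C^{0,1}_p(Q'_{2r})}$ follows from the barriers only when $d$ is comparable to $r$: there $|(2r/d)^{\pm\varepsilon}-1|\lesssim\varepsilon$, which gives $|\varphi_r-d|\lesssim \varepsilon r$. Near the boundary this fails. For instance $d^{1-\varepsilon}$ exceeds $d$ by $d(2r/d)^\varepsilon - d$, and this is still $\lesssim \varepsilon r$ after optimizing over $d\in(0,2r)$. Indeed, $\sup_{s\in(0,1)} s(s^{-\varepsilon}-1)\le C\varepsilon$, since $s^{1-\varepsilon}-s\le \varepsilon s\log(1/s)\cdot C\le C\varepsilon$. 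Likewise $d-(2r)^{-\varepsilon}d^{1+\varepsilon}=d(1-s^\varepsilon)\le 2r\,\varepsilon\, s\log(1/s)\le C\varepsilon r$ with $s=d/2r$. So $|\varphi_r-d|\le\max(\overline\psi-d,\,d-\underline\psi)\le C\varepsilon r$ pointwise throughout, which is the desired bound with $K=CC_0$.

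\textbf{Main obstacle.} The delicate step is the first one: justifying that Lemma \ref{lem:barriers} holds uniformly in all of $\Omega\cap Q_r$ with the single constant $\varepsilon$. This requires checking that the cylinders $Q'_d(x',t)$ stay inside $Q'_{2r}$. Using $d\le(1+CL)(x_n-\Gamma)\le(1+CL)(r+Lr)$, this holds for $L_0$ small. A second check is that $d$ is defined on $\Omega\cap Q_r$ (we have $r<1/3<1/2$). If needed, the barriers can be compared on slightly smaller cylinders and a limit taken.
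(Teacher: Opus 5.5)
Your proposal is correct and follows the paper's proof: take $\varphi_r$ with boundary data $d$ on $\p_p(\Omega\cap Q_r)$, trap it between $(2r)^{-\varepsilon}d^{1+\varepsilon}$ and $(2r)^{\varepsilon}d^{1-\varepsilon}$ using Lemma \ref{lem:barriers} and comparison, and bound $|\varphi_r-d|$ by the gap between these two barriers, which is at most $Cr\varepsilon$. Two harmless slips need fixing. First, the pointwise bound $s^{1-\varepsilon}-s\le C\varepsilon s\log(1/s)$ fails for very small $s$; maximizing directly gives $\sup_{s\in(0,1)}(s^{1-\varepsilon}-s)\le \varepsilon/(1-\varepsilon)$ instead. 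Second, $B'_d(x')\subset B'_{2r}$ holds only up to a factor $1+O(L)$ near the lateral corners of $Q_r$, however small $L_0$ is; this is fixed by taking the mollifier $\eta$ in Proposition \ref{prop:regularized_distance} supported in $Q'_{1/2}$, and the paper glosses over the same point.
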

	
	\begin{proof}
		First, by Proposition \ref{prop:regularized_distance}, $d \leq 2r$ in $\Omega\cap Q_r$, and then $(2r)^{-\varepsilon}d^{1+\varepsilon} \leq (2r)^\varepsilon d^{1-\varepsilon}$. Now, let $\varphi_r$ be the solution to
		$$\left\{\begin{array}{rclll}
			\p_t\varphi_r - \L\varphi_r & = & 0 & \text{in} & \Omega\cap Q_r\\
			\varphi_r & = & d & \text{on} & \p_p(\Omega\cap Q_r).
		\end{array}\right.$$
		From Lemma \ref{lem:barriers} and the comparison principle, it follows that
		$$(2r)^{-\varepsilon}d^{1+\varepsilon} \leq \varphi_r \leq (2r)^\varepsilon d^{1-\varepsilon} \quad \text{in} \ \Omega\cap Q_r.$$
		Moreover,
		\begin{align*}
			\|\varphi_r - d\|_{L^\infty(\Omega\cap Q_r)} &\leq \|(2r)^{-\varepsilon}d^{1+\varepsilon}-(2r)^\varepsilon d^{1-\varepsilon}\|_{L^\infty(\Omega\cap Q_r)}\\[0.05cm]
			&\leq \sup\limits_{\eta \in [0,2r]}(2r)^\varepsilon {\eta}^{1-\varepsilon} - (2r)^{-\varepsilon}{\eta}^{1+\varepsilon} \leq Cr\varepsilon.
		\end{align*}
	\end{proof}
	In what follows, we introduce an auxiliary lemma that will be useful for the proof of Theorem \ref{thm:hopf}. 
	
	\begin{lem}\label{Lemma_ATL}
		Let $\varphi_k := \varphi_{2^{-(k+1)}}$ and $\varepsilon_k := C_0[\Gamma]_{C_p^{0, 1}\left(Q'_{2^{-k}}\right)}$ be sequences defined as in Proposition \ref{prop:special_solns}. Define the functions $v$ and $w$ as follows:
		\begin{equation*}
			v(x, t) := \frac{\varphi_{k-1} \left(2^{-k-1}x, 2^{-2(k+1)}t\right)}{\varphi_{k-1} \left(2^{-k-2}e_n, -3 \cdot 2^{-2(k+2)}  \right)}\,  \, \,  \text{and} \, \, \,   w(x, t) := \frac{\varphi_k\left(2^{-k-1}x, 2^{-2(k+1)}t \right)}{\varphi_k\left(2^{-k-2}e_n, -3 \cdot 2^{-2(k+2)} \right)}.
		\end{equation*}
		Then, $v\left(\frac{e_n}{2}, -\frac{3}{4}\right) = w\left(\frac{e_n}{2}, -\frac{3}{4}\right) = 1$ and
		\[
		\|v - w\|_{L^\infty(\tilde\Omega\cap Q_1)} \le  M \varepsilon_{k-1},
		\]
		where $\tilde\Omega$ is the appropriate rescaling of $\Omega$, and $M$ is a positive constant depending only on the dimension and ellipticity constants.
	\end{lem}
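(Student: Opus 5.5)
The plan is to compare both $\varphi_{k-1}$ and $\varphi_k$ with the \emph{same} regularized distance $d$ from Proposition \ref{prop:regularized_distance}, which does not depend on $r$. We use Proposition \ref{prop:special_solns} to control the numerators and the barrier bounds to control the denominators, and then combine everything with an elementary quotient estimate.

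First, the normalizations and the domains. Since $\varphi_k=\varphi_{2^{-(k+1)}}$ is defined in $\Omega\cap Q_{2^{-k-1}}$ and $\varphi_{k-1}=\varphi_{2^{-k}}$ is defined in $\Omega\cap Q_{2^{-k}}\supset\Omega\cap Q_{2^{-k-1}}$, the map $(x,t)\mapsto(2^{-k-1}x,2^{-2(k+1)}t)$ sends $\tilde\Omega\cap Q_1$ into the common domain. This map sends $(\frac{e_n}{2},-\frac34)$ to $(2^{-k-2}e_n,-3\cdot 2^{-2(k+2)})$, so $v(\frac{e_n}{2},-\frac34)=w(\frac{e_n}{2},-\frac34)=1$ holds by definition. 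Next, we identify the exponents. For $\varphi_k$ we have $2r=2^{-k}$, so its exponent is exactly $\varepsilon_k$. For $\varphi_{k-1}$ the exponent is $\varepsilon_{k-1}$. Since $Q'_{2^{-k}}\subset Q'_{2^{-k+1}}$, monotonicity of the seminorm gives $\varepsilon_k\le\varepsilon_{k-1}$.

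Second, the numerators. Write $z=(2^{-k-1}x,2^{-2(k+1)}t)$ and set $a=\varphi_{k-1}(z)$, $b=\varphi_k(z)$. By the last estimate of Proposition \ref{prop:special_solns}, applied to each function in $\Omega\cap Q_{2^{-k-1}}$,
$$|a-b|\le|\varphi_{k-1}-d|+|\varphi_k-d|\le K2^{-k}[\Gamma]_{C^{0,1}_p(Q'_{2^{-k+1}})}+K2^{-k-1}[\Gamma]_{C^{0,1}_p(Q'_{2^{-k}})}\le C2^{-k}\varepsilon_{k-1}.$$
The same estimate at the normalization point bounds the denominators' difference: with $A=\varphi_{k-1}(2^{-k-2}e_n,-3\cdot2^{-2(k+2)})$ and $B=\varphi_k(2^{-k-2}e_n,-3\cdot2^{-2(k+2)})$, we get $|A-B|\le C2^{-k}\varepsilon_{k-1}$. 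For the size of the numerator, the upper barrier gives $|b|\le(2^{-k})^{\varepsilon_k}d^{1-\varepsilon_k}\le C2^{-k}$. Here we use $d\le 2\cdot 2^{-k-1}$, which follows from the first estimate of Proposition \ref{prop:regularized_distance} together with the smallness of $L$.

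Third, lower bounds on the denominators; this is the step I expect to need the most care. At the point $(2^{-k-2}e_n,-3\cdot2^{-2(k+2)})$, the Lipschitz bound on $\Gamma$ with $\Gamma(0,0)=0$ gives $x_n-\Gamma\ge 2^{-k-2}(1-CL)$. By Proposition \ref{prop:regularized_distance}, it follows that $d\ge c2^{-k}$ there. The lower barrier $(2r)^{-\varepsilon}d^{1+\varepsilon}$ then yields $A,B\ge c2^{-k}$, uniformly in $k$, because $(d/2r)^{\varepsilon}$ is bounded below when $d/2r$ is comparable to a constant and $\varepsilon\le C_0L$ is small.

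Finally, we conclude with the quotient estimate
$$\Bigl|\frac{a}{A}-\frac{b}{B}\Bigr|\le\frac{|a-b|}{A}+\frac{|b|\,|A-B|}{AB}\le\frac{C2^{-k}\varepsilon_{k-1}}{c2^{-k}}+\frac{C2^{-k}\cdot C2^{-k}\varepsilon_{k-1}}{c^2 2^{-2k}}\le M\varepsilon_{k-1},$$
which holds uniformly for $(x,t)\in\tilde\Omega\cap Q_1$. All constants depend only on the dimension and the ellipticity constants, via $K$, $C_0$ and $L_0$.
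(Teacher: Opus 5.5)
Your proof is correct and follows essentially the same route as the paper: you pass through the common regularized distance $d$ via Proposition \ref{prop:special_solns} to bound both $|\varphi_{k-1}-\varphi_k|$ and the difference of the normalizing values at $z_k$ by $C2^{-k}\varepsilon_{k-1}$, and then apply the same elementary quotient estimate. The only difference is cosmetic: you bound the denominators from below and the numerator from above with the barrier bounds $(2r)^{\mp\varepsilon}d^{1\pm\varepsilon}$, whereas the paper uses $d(z_k)\gtrsim 2^{-k}$ together with $|\varphi-d|\le Kr[\Gamma]$; both are valid under the standing smallness of $L$.
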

	
	\begin{proof}
		The normalization condition is straightforward from the definition of $v$ and $w$. 
		
		Now we shall estimate the $L^{\infty}$ norm of $v - w$. For simplicity, we denote the point $ z_k:= (2^{-k-2}e_n,  -3 \cdot 2^{-2(k+2)})$. 
		
		First, we apply Proposition \ref{prop:special_solns} for $\varphi_{k-1}$ and $\varphi_k$ to obtain
		\begin{align*}
			\|\varphi_{k-1}-\varphi_k\|_{L^{\infty} (\Omega\cap Q_{2^{-k-1}})}
			&\le \|\varphi_{k-1}- d\|_{L^{\infty} (\Omega\cap Q_{2^{-k-1}})} + \| \varphi_k - d\|_{L^{\infty} (\Omega\cap Q_{2^{-k-1}})}\nonumber\\
			& \le 2^{-k}K[\Gamma]_{C_p^{0, 1}(Q'_{2^{-k+1}})} + 2^{-k-1} K [\Gamma]_{C_p^{0, 1}(Q'_{2^{-k}})} \nonumber\\
			& < 2^{-k+1}K[\Gamma]_{C_p^{0, 1}(Q'_{2^{-k+1}})},     
		\end{align*}
		\begin{equation*} 
			\| \varphi_{k -1} \|_{L^{\infty} (\Omega\cap Q_{2^{-k-1}})} \le 2^{-k}K [\Gamma]_{C_p^{0, 1}(Q'_{2^{-k +1}})} + 2^{-k},
		\end{equation*}
		and 
		\begin{equation*} 
			\| \varphi_{k} \|_{L^{\infty} (\Omega\cap Q_{2^{-k -1}})} \le 2^{-k -1}K [\Gamma]_{C_p^{0, 1}(Q'_{2^{-k}})} + 2^{-k}.
		\end{equation*}
		
		Hence,
		\begin{align*} \label{ineq:difference-estimates}
			\|v - w\|_{L^\infty(\tilde\Omega\cap Q_1)} 
			&= \left \|\frac{\varphi_{k-1}}{\varphi_{k-1}(z_k)} - \frac{\varphi_k}{\varphi_k(z_{k})}  \right\|_{\infty}   \\
			&\le  \left| \frac{1}{\varphi_{k-1}(z_k)} - \frac{1}{\varphi_k(z_{k})} \right| \|\varphi_{k-1}\|_{\infty} + \frac{1}{\varphi_{k}(z_k)} \left\| \varphi_{k-1} - \varphi_{k} \right\|_{\infty} \\
			& \le \frac{\left\| \varphi_{k-1} - \varphi_{k} \right\|_{\infty}}{\varphi_{k-1}(z_k) \varphi_k(z_{k})} \|\varphi_{k-1}\|_{\infty} + \frac{1}{\varphi_k(z_k)} \left\| \varphi_{k-1} - \varphi_{k} \right\|_{\infty}  \\
			&\leq \frac{2^{-k+1}K[\Gamma]_{C_p^{0, 1}\left(Q'_{2^{-k+1}}\right)}}{\left(2^{-k-3}-2^{-k}K[\Gamma]_{C_p^{0, 1}\left(Q'_{2^{-k+1}}\right)}\right)^2}\left(2^{-k} + 2^{-k}K[\Gamma]_{C_p^{0, 1}\left(Q'_{2^{-k+1}}\right)}\right)\, + \\
			&\quad+\frac{1}{2^{-k-3}-2^{-k}K[\Gamma]_{C_p^{0, 1}\left(Q'_{2^{-k}}\right)}}2^{-k+1}K[\Gamma]_{C_p^{0, 1}\left(Q'_{2^{-k+1}}\right)} \\
			& \le M \varepsilon_{k-1}.
		\end{align*}     
		where throughout the proof, we use the notation $\| \cdot \|_{\infty}$ to denote $\|\cdot\|_{L^\infty(Q_{2^{-k-1}})}$ for formatting.
	\end{proof}	
	
	In what follows, we introduce an analogous auxiliary result that we will use in the proof of Theorem \ref{thm:upper}.
	
	\begin{lem}\label{Lemma_ATL2}
		Let $\varphi_k := \varphi_{2^{-2k-1}}$ and $ \varepsilon_k := C_0[\Gamma]_{C_p^{0, 1}\left(Q'_{4^{-k}}\right)}$ be sequences defined as in Proposition \ref{prop:special_solns}. Let $v$ and $w$ be as follows:
		\begin{equation*}
			v(x, t) := \frac{\varphi_{k-1}\left(2^{-2k-1}x, 2^{-2(2k+1)}t\right)}{\varphi_{k-1}\left(2^{-2k-2}e_n, -3 \cdot 2^{-2(2k+2)}  \right)}, \ \text{and} \ w(x, t) := \frac{\varphi_k\left(2^{-2k-1}x, 2^{-2(2k+1)}t\right)}{\varphi_k\left(2^{-2k-2}e_n, -3 \cdot 2^{-2(2k+2)} \right)}.
		\end{equation*}
		Then, $v\left(\frac{e_n}{2}, -\frac{3}{4}\right) = w\left(\frac{e_n}{2}, -\frac{3}{4}\right) = 1$ and
		\[
		\|v - w\|_{L^\infty(\tilde\Omega\cap Q_1)} \le  M \varepsilon_{k-1},
		\]
		where $\tilde\Omega$ is the appropriate rescaling of $\Omega$, and $M$ is a positive constant depending only on the dimension and ellipticity constants.
	\end{lem}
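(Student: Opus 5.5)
The plan is to repeat the proof of Lemma \ref{Lemma_ATL} verbatim, replacing the dyadic scales $2^{-k}$ by $4^{-k}$. Set $z_k := (2^{-2k-2}e_n, -3\cdot 2^{-2(2k+2)})$. By construction,
\[
v(\tfrac{e_n}{2},-\tfrac34) = \frac{\varphi_{k-1}(z_k)}{\varphi_{k-1}(z_k)} = 1,
\]
and the same holds for $w$, so the normalization is immediate. For the $L^\infty$ bound, the rescaling $(x,t)\mapsto(2^{-2k-1}x,2^{-2(2k+1)}t)$ maps $\tilde\Omega\cap Q_1$ onto $\Omega\cap Q_{2^{-2k-1}}$. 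It therefore suffices to estimate
\[
\left\|\frac{\varphi_{k-1}}{\varphi_{k-1}(z_k)}-\frac{\varphi_k}{\varphi_k(z_k)}\right\|_{L^\infty(\Omega\cap Q_{2^{-2k-1}})}.
\]
Note that $\varphi_{k-1}=\varphi_{2^{-2k+1}}$ is defined on the larger set $\Omega\cap Q_{2^{-2k+1}}$, which contains $Q_{2^{-2k-1}}$.

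Step one applies Proposition \ref{prop:special_solns} to $\varphi_{k-1}$ with $r=2^{-2k+1}$ and to $\varphi_k$ with $r=2^{-2k-1}$. Using the triangle inequality through $d$, this gives
\[
\|\varphi_{k-1}-\varphi_k\|_{L^\infty(\Omega\cap Q_{2^{-2k-1}})}\le 2^{-2k+1}K[\Gamma]_{C^{0,1}_p(Q'_{4^{-k+1}})}+2^{-2k-1}K[\Gamma]_{C^{0,1}_p(Q'_{4^{-k}})}\le C4^{-k}\varepsilon_{k-1}.
\]
Here we used that $[\Gamma]$ is monotone in the set and that $2r = 2^{-2k+2}=4^{-k+1}$, which matches the definition of $\varepsilon_{k-1}$. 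Likewise, since $d\le 2\cdot 2^{-2k-1}$ on $Q_{2^{-2k-1}}$ by Proposition \ref{prop:regularized_distance}, we get $\|\varphi_{k-1}\|_\infty\le C4^{-k}$.

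Step two, which is the only point needing care, is the lower bound on the denominators $\varphi_{k-1}(z_k)$ and $\varphi_k(z_k)$. At $z_k$ we have $x_n = 2^{-2k-2}$, and $|\Gamma(0,t)|\le L|t|^{1/2}\le \sqrt3 L\,2^{-2k-2}$. By Proposition \ref{prop:regularized_distance} with $L$ small, $d(z_k)\ge c\,4^{-k}$; for instance $d(z_k)\ge 2^{-2k-3}$ once $L_0$ is small. Proposition \ref{prop:special_solns} then gives
\[
\varphi_j(z_k)\ge d(z_k)-CK4^{-k}\varepsilon_{k-1}\ge c4^{-k},
\]
provided $\varepsilon_{k-1}$, and hence $L$, is below a dimensional threshold. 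This can be ensured by the smallness of $L_0$. Alternatively, one can use the lower barrier $(2r)^{-\varepsilon}d^{1+\varepsilon}$, which is comparable to $4^{-k}$ since $d(z_k)/(2r)$ is bounded below.

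Finally, decompose exactly as in Lemma \ref{Lemma_ATL}:
\[
\|v-w\|\le \frac{\|\varphi_{k-1}-\varphi_k\|_\infty\,\|\varphi_{k-1}\|_\infty}{\varphi_{k-1}(z_k)\varphi_k(z_k)}+\frac{\|\varphi_{k-1}-\varphi_k\|_\infty}{\varphi_k(z_k)}.
\]
Plugging in the bounds, the first term is at most $\dfrac{C4^{-k}\varepsilon_{k-1}\cdot C4^{-k}}{c^2 4^{-2k}}$ and the second is at most $\dfrac{C4^{-k}\varepsilon_{k-1}}{c4^{-k}}$. All powers of $4^{-k}$ cancel, which yields $\|v-w\|\le M\varepsilon_{k-1}$ with $M$ depending only on dimension and ellipticity constants.
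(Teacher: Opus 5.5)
Your proposal is correct and matches the paper's approach: the paper's proof only says it follows the same lines as Lemma \ref{Lemma_ATL}, and you carry out that argument at scale $4^{-k}$, with the triangle inequality through $d$, the lower bound $\varphi_j(z_k)\ge c\,4^{-k}$ from the smallness of $L_0$, and the same two-term splitting of $v-w$. Your scale bookkeeping is also right: $2r=4^{-k+1}$ for $\varphi_{k-1}$, $2r=4^{-k}$ for $\varphi_k$, and $z_k\in\Omega\cap Q_{2^{-2k-1}}$.
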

	\begin{proof}
		The proof follows the same lines as the proof of Lemma \ref{Lemma_ATL}.
	\end{proof}

    \newpage
    
	\section{Quantitative nondegeneracy and regularity in \texorpdfstring{$C^1$}{C1} domains}\label{sect:C1}
	
	We first prove our boundary nondegeneracy estimate for domains with the interior $C^1$ condition.
	
	\begin{proof}[Proof of Theorem \ref{thm:hopf}]
		
		Let $u$ be a nonnegative solution to 
		\[
		\p_t u - \L u = 0\quad \text{in} \quad \Omega.
		\]
		
		Let $r_0 > 0$ small enough such that $\omega(r_0) < \omega_0$ to be determined later. Without loss of generality after rescaling and dividing by a constant, we may assume that $u\left(\frac{e_n}{2}, -\frac{3}{4}\right) = 1$ and $r_0 = 1$. In the sequel, we define the sequences 
		\[
		\varphi_k := \varphi_{2^{-k-1}} \quad \text{and} \quad \varepsilon_k := C_0[\Gamma]_{C_p^{0, 1}\left(Q'_{2^{-k}}\right)}
		\]
		as in Lemma \ref{Lemma_ATL}. From Proposition \ref{prop:special_solns}, we have that $\varphi_0$ solves
		$$
		\left\{\begin{array}{rclll}
			\p_t\varphi_0 - \L\varphi_0 & = & 0 & \text{in} & \Omega \cap Q_{1/2} \\
			\varphi_0& = & 0 & \text{on} & \p_\Gamma\Omega \cap Q_{1/2}.
		\end{array}\right.
		$$    
		We also introduce $u_0$ as the solution to
		$$
		\left\{\begin{array}{rclll}
			\p_tu_0 - \L u_0 & = & 0 & \text{in} & \Omega\\
			u_0 & = & 0 & \text{on} & \p_\Gamma\Omega\\
			u_0& = & u & \text{on} & (\p_p\Omega)\setminus\p_\Gamma\Omega.
		\end{array}\right.
		$$
		By the maximum principle, $u \geq u_0$.
		
		Now, by the boundary Harnack (Theorem \ref{thm:boundary_harnack}) applied to $u_0$ and $\varphi_0$, we deduce that
		$$u \geq u_0 \geq c_0\varphi_0\chi_{Q_{1/4}}.$$
		
		Now, we will see inductively that for $k \geq 1$,
		$$u \geq c_k\varphi_k\chi_{Q_{2^{-k-2}}}, \quad c_k := (1-A\varepsilon_{k-1})c_{k-1},$$
		where $A$ is a large constant to be chosen later depending only on the dimension and ellipticity constants. To prove that, we recall the functions $v$ and $w$ defined as in Lemma \ref{Lemma_ATL}, namely 
		$$
		v(x, t) := \frac{\varphi_{k-1}\left(2^{-k-1}x, 2^{-2(k+1)}t\right)}{\varphi_{k-1}\left(2^{-k-2}e_n, -3 \cdot 2^{-2(k+2)}  \right)}, \ \text{and} \ w(x, t) := \frac{\varphi_k\left(2^{-k-1}x, 2^{-2(k+1)}t\right)}{\varphi_k\left(2^{-k-2}e_n, -3 \cdot 2^{-2(k+2)} \right)}
		$$
		
		Next, we define the following auxiliary function 
		$$
		h := \frac{v - (1-\mu_0^{-1}M\varepsilon_{k-1})w}{\mu_0^{-1}M\varepsilon_{k-1}}.$$
		Denoting by $\tilde\L$ and $\tilde\Omega$ the appropriate rescalings of $\L$ and $\Omega$, $h$ is a solution to
		$$
		\left\{\begin{array}{rclll}
			\p_t h - \tilde\L h & = & 0 & \text{in} & \tilde\Omega\cap Q_1\\
			h & \geq & \!-\mu_0 & \text{in} & \tilde\Omega\cap Q_1\\
			h & = & 0 & \text{on} & \p_\Gamma\tilde\Omega\cap Q_1\\
			h\left(\frac{e_n}{2}, -\frac{3}{4}\right)\hspace{-0.3em} & = & 1, &&
		\end{array}\right.
		$$
        It is straightforward to check that $h$ solves the problem above; in particular, Lemma \ref{Lemma_ATL} gives $h \geq -\mu_0$. Then, by Lemma \ref{lem:almost_positivity}, $h \geq 0$ in $\tilde\Omega\cap Q_{1/2}$, that is,
        \[
        v \geq (1-\mu_0^{-1}M\varepsilon_{k-1})w\quad \text{in} \quad \tilde\Omega\cap Q_{1/2}.
        \]      
		
		Going back to our inductive argument, if we assume that $u \geq c_{k-1}\varphi_{k-1}$ in ${\Omega\cap Q_{2^{-k-1}}}$, this implies that
		\begin{align*}
			u(x, t) &\geq c_{k-1}\varphi_{k-1}\left(2^{-k-2}e_n, -3 \cdot 2^{-2(k+2)} \right)v \left(2^{k+1}x, 2^{2(k+1)}t \right) \\
			&\geq c_{k-1}\varphi_{k-1}\left(2^{-k-2}e_n,  -3 \cdot 2^{-2(k+2)} \right)(1-\mu_0^{-1}M\varepsilon_{k-1})w\left(2^{k+1}x, 2^{2(k+1)}t \right)\\
			&\geq c_{k-1}\frac{\varphi_{k-1}\left(2^{-k-2}e_n,  -3 \cdot 2^{-2(k+2)} \right)}{\varphi_{k}\left(2^{-k-2}e_n,  -3 \cdot 2^{-2(k+2)} \right)}(1-\mu_0^{-1}M\varepsilon_{k-1})\varphi_k\\
			&\geq c_{k-1}\frac{2^{-k-3}-2^{-k}K[\Gamma]_{C_p^{0, 1}\left(Q'_{2^{-k+1}}\right)}}{2^{-k-3}+2^{-k-1}K[\Gamma]_{C_p^{0, 1}\left(Q'_{2^{-k}}\right)}}(1-\mu_0^{-1}M\varepsilon_{k-1})\varphi_k \\
			&\geq c_{k-1}(1 - A\varepsilon_{k-1})\varphi_k,
		\end{align*}
		in $\Omega\cap Q_{2^{-k-2}}$, provided that $\varepsilon_{k-1}$ is small enough.
		
		Hence, by choosing $\omega_0$ small enough (so that $\varepsilon_j < 1/(2A)$ for all $j$), we compute
		$$c_k = c_0\prod\limits_{j = 0}^{k-1}(1 - A\varepsilon_j) \geq c_0\cdot 4^{-A\sum\limits_{j=0}^{k-1}\varepsilon_j}.$$
		Thus, for any $r \in [2^{-k-3},2^{-k-2}]$ combined with the growth condition for $\varphi_k$  in the Proposition \ref{prop:special_solns},
		\begin{align*}
			u(re_n, 0) &\geq c_k\varphi_k(re_n, 0) \\
			&\geq c_0\cdot 4^{-A\sum\limits_{j = 0}^{k-1}\varepsilon_j}\varphi_k(re_n, 0)\\
			&\geq c_0\cdot 4^{-A\sum\limits_{j = 0}^{k-1}\varepsilon_j}(2^{-k})^{-\varepsilon_k}\left(\frac{r}{2}\right)^{1+\varepsilon_k} \geq \frac{c_0}{32}\cdot4^{-A\sum\limits_{j = 0}^{k-1}\varepsilon_j}r.
		\end{align*}
		Therefore,
		$$\frac{u(re_n, 0)}{r} \geq \frac{1}{C}\exp \left(-C\sum\limits_{j=0}^{k-1}\omega(2^{-j}) \right)\geq \frac{1}{C}\exp \left(-C\int_{8r}^2\omega(s)\frac{\mathrm{d}s}{s}\right).$$
	\end{proof}
	
	The proof of the upper bound for domains with the exterior $C^1$ condition follows the same strategy, with some extra work to deal with the Dirichlet boundary condition and the source term.
	
	\begin{proof}[Proof of Theorem \ref{thm:upper}]
		Let $r_0 > 0$ small enough such that $\omega(r_0), \omega_g(r_0), \omega_f(r_0) < \omega_0$ to be determined later. Without loss of generality, after rescaling and subtracting a linear function, we may assume that $r_0 = 1$, $g(0, 0) = 0$ and $\nabla\!_x g(0, 0) = 0$. Then, we define the sequences $\varphi_k := \varphi_{2^{-2k-1}}, \varepsilon_k := C_0[\Gamma]_{C_p^{0, 1}\left(Q'_{4^{-k}}\right)}$ as introduced in Proposition~\ref{prop:special_solns}. Also, as a consequence of Proposition \ref{prop:special_solns}, we obtain that $\varphi_0$ is a solution to
		$$
		\left\{\begin{array}{rclll}
			\p_t\varphi_0 - \L\varphi_0 & = & 0 & \text{in} & \Omega \cap Q_{1/2}\\
			\varphi_0& = & 0 & \text{on} & \p_\Gamma\Omega \cap Q_{1/2}.
		\end{array}\right.
		$$
		
		Now, define $\tilde u_0$ as the solution to
		
		$$\left\{\begin{array}{rclll}
			\p_t \tilde u_0 -  \L \tilde u_0 & = & 0 & \text{in} & \Omega \cap Q_{1/2}\\
			\tilde u_0 & = & |u| & \text{on} & \p_p Q_{1/2} \cap \Omega \\
			\tilde u_0 & = & 0 & \text{on} & \p_\Gamma\Omega \cap Q_{1/2}.
		\end{array}\right.$$
		Applying the parabolic boundary Harnack (Theorem \ref{thm:boundary_harnack}) to $\tilde u_0$ and $\varphi_0$, we obtain
		$$\tilde u_0 \leq c_0\varphi_0 \quad \text{in} \ \Omega\cap Q_{1/4},
		$$
		where $c_0 \leq C\|u\|_{L^\infty(Q_{1/2})}$, and by the comparison principle and Theorem \ref{thm:ABPKT},
		\begin{equation} \label{induction_first-case}
			|u| \leq \tilde u_0 + \|g\|_{L^\infty(Q_{1/2})} + 2^{-n/(n+1)}C\|f\|_{L^{n+1}(\Omega\cap Q_{1/2})} \leq c_0\varphi_0 + d_0 \quad \text{in} \ \Omega\cap Q_{1/4},
		\end{equation}
		where $d_0 = \omega_g(1)+C\omega_f(1)$.
		
		We now prove, by induction, that for $k \geq 1$,
		$$|u| \leq c_k\varphi_k + 4^{-k}d_k \quad \text{in} \, \,  \Omega \cap Q_{4^{-k-1}},$$
		where 
		$$c_k := (1+A\varepsilon_{k-1})c_{k-1} + Ad_{k-1} \quad \text{and} \quad d_k := \omega_g( 4^{-k})+C\omega_f( 4^{-k}),$$
		and $A$ is a large constant to be chosen later, depending only on the dimension and ellipticity constants. 
		
		In the sequel, let us consider $\tilde u_k$ to be a solution to
		\[
		\left\{\begin{array}{rclll}
			\p_t \tilde u_k - \L \tilde u_k & = & 0 & \text{in} & \Omega \cap Q_{4^{-k}}\\
			\tilde u_k & = & |u| & \text{on} & \p_p Q_{4^{-k}} \cap \Omega\\
			\tilde u_k & = & 0 & \text{on} & \p_\Gamma\Omega \cap Q_{4^{-k}}.
		\end{array}\right.
		\]
		In particular, it follows from the induction hypothesis that
        \[
		\tilde u_k \leq c_{k-1}\varphi_{k-1} +  4^{-k+1}d_{k-1} \quad \text{on} \, \,  \p_p Q_{4^{-k}} \cap \Omega,
		\]
		and by applying the comparison principle, we have that
		\begin{equation} \label{ineq_uk}
			\tilde u_k \leq c_{k-1}\varphi_{k-1} +  4^{-k+1}d_{k-1} \quad \text{in} \, \,  \Omega \cap Q_{4^{-k}}.   
		\end{equation}

        Since $\varphi_{k-1} := \varphi_{2^{-2k+1}}$, the growth estimates in Proposition \ref{prop:special_solns} give
		\[
		\varphi_{k-1}\left(2^{-2k-1}e_n, -3\cdot 2^{-2(2k+1)}\right) \simeq 2^{-2k - 1}
		\]
		
		It follows from the inequality \eqref{ineq_uk} that
		\[
		\tilde u_k - c_{k-1}\varphi_{k-1} \le 4^{-k+1}d_{k-1} \quad \text{in} \, \,  \Omega \cap Q_{4^{-k}}.   
		\]
		Using the parabolic boundary Harnack inequality (Theorem \ref{thm:boundary_harnack}) with $\frac{ \tilde u_k - c_{k-1}\varphi_{k-1}}{4^{-k+1}d_{k-1}} $ and $\frac{\varphi_{k-1}}{\| \varphi_{k-1}\|_{L^\infty(Q_{4^{-k}})}}$, we get
		\[
		\tilde u_k - c_{k-1}\varphi_{k-1} \le C \frac{\varphi_{k-1}}{\| \varphi_{k-1}\|_{L^\infty(Q_{4^{-k}})}}4^{-k+1}d_{k-1} \leq Cd_{k-1}\varphi_{k-1} \quad \text{in} \ \Omega\cap Q_{2^{-2k-1}},
		\]
		that is, $\tilde u_k \leq (c_{k-1} + Cd_{k-1})\varphi_{k-1}$ in $\Omega\cap Q_{2^{-2k-1}}$.
		
		Now, we argue as in the proof of Theorem \ref{thm:hopf}. First, we consider the functions $v$ and $w$ defined as in Lemma \ref{Lemma_ATL2}, namely 
		$$
		v(x, t) := \frac{\varphi_{k-1}\left(2^{-2k-1}x, 2^{-2(2k+1)}t\right)}{\varphi_{k-1}\left(2^{-2k-2}e_n, -3 \cdot 2^{-2(2k+2)}  \right)}, \ \text{and} \ w(x, t) := \frac{\varphi_k\left(2^{-2k-1}x, 2^{-2(2k+1)}t\right)}{\varphi_k\left(2^{-2k-2}e_n, -3 \cdot 2^{-2(2k+2)} \right)}.
		$$
		In the sequel, we introduce the following auxiliary function 
		$$
		h := \frac{ (1+\mu_0^{-1}M\varepsilon_{k-1})w - v}{\mu_0^{-1}M\varepsilon_{k-1}}.
		$$
		Denoting by $\tilde\L$ and $\tilde\Omega$ the appropriate rescalings of $\L$ and $\Omega$, $h$ is a solution to
		$$
		\left\{\begin{array}{rclll}
			\p_t h - \tilde\L h & = & 0 & \text{in} & \tilde\Omega\cap Q_1\\
			h & \geq & \!-\mu_0 & \text{in} & \tilde\Omega\cap Q_1\\
			h & = & 0 & \text{on} & \p_\Gamma\tilde\Omega\cap Q_1\\
			h\left(\frac{e_n}{2}, -\frac{3}{4}\right)\hspace{-0.3em} & = & 1, &&
		\end{array}\right.
		$$

        It is straightforward to check that $h$ solves the above problem; in particular, Lemma \ref{Lemma_ATL2} gives $h \geq -\mu_0$. Then, by Lemma \ref{lem:almost_positivity}, $h \geq 0$ in $\tilde\Omega\cap Q_{1/2}$, that is,
		\[
		v \leq (1+\mu_0^{-1}M\varepsilon_{k-1})w\quad  \text{in} \quad \tilde\Omega\cap Q_{1/2}.
		\]
		Scaling back and computing in the same way as in the proof of the Theorem  \ref{thm:hopf}.
		\begin{align*}
			\tilde u_k & \le \left( c_{k-1} + C d_{k-1}\right) \varphi_{k-1} \\
			&\le  \left( c_{k-1} + C d_{k-1}\right)  (1+\mu_0^{-1}M\varepsilon_{k-1}) \frac{\varphi_{k-1}\left(2^{-2k-2}e_n, -3 \cdot 2^{-2(2k+2)} \right)}{\varphi_k\left(2^{-2k-2}e_n, -3 \cdot 2^{-2(2k+2)} \right)} \varphi_k\\
			&\le  \left[(1+A\varepsilon_{k-1})c_{k-1} + Ad_{k-1} \right]\varphi_k
		\end{align*}
		in $\Omega \cap Q_{4^{-k-1}}$ provided that $\varepsilon_{k-1}$ is sufficiently small. In other words, 
		\[
		\tilde u_k \le c_k \varphi_k \, \, \text{in} \, \,  \Omega \cap Q_{4^{-k-1}}.
		\]
		Arguing as in \eqref{induction_first-case}, we may apply the comparison principle and Theorem \ref{thm:ABPKT} to obtain
		$$
		|u| \leq c_k\varphi_k + 4^{-k}d_k \quad \text{in} \, \,  \Omega \cap Q_{4^{-k-1}},
		$$
		Now, we compute
		$$c_k = c_0\prod\limits_{j=0}^{k-1}(1+A\varepsilon_j) + A\sum\limits_{i=0}^{k-1}d_i\prod\limits_{j=i+1}^{k-1}(1+A\varepsilon_j) \leq c_0 e^{A\sum\limits_{j=0}^{k-1}\varepsilon_j} + A\sum\limits_{i=0}^{k-1}d_ie^{A\sum\limits_{j={i+1}}^{k-1}\varepsilon_j}.$$
		Thus, for any $r \in [4^{-k-2},4^{-k-1}]$,
		
		\begin{align*}
			\|u\|_{L^\infty(Q_r)} &\leq c_k\|\varphi_k\|_{L^\infty(Q_r)} + 4^{-k}d_k\\
			&\leq \left(c_0 e^{A\sum\limits_{j=0}^{k-1}\varepsilon_j} + A\sum\limits_{i=0}^{k-1}d_ie^{A\sum\limits_{j={i+1}}^{k-1}\varepsilon_j}\right)\|\varphi_k\|_{L^\infty(Q_r)} + 4^{-k}d_k\\
			&\leq \left(c_0 e^{A\sum\limits_{j=0}^{k-1}\varepsilon_j} + A\sum\limits_{i=0}^{k-1}d_ie^{A\sum\limits_{j={i+1}}^{k-1}\varepsilon_j}\right)(4^{-k})^{\varepsilon_k}(2r)^{1-\varepsilon_k} + 4^{-k}d_k\\
			&\leq 16\left(c_0 e^{A\sum\limits_{j=0}^{k-1}\varepsilon_j} + A\sum\limits_{i=0}^{k-1}d_ie^{A\sum\limits_{j={i+1}}^{k-1}\varepsilon_j} + d_k\right)r,
		\end{align*}		
		and by using Proposition \ref{prop:special_solns}, we may replace $\varepsilon_k= C_0[\Gamma]_{C_p^{0, 1}\left(Q'_{4^{-k}}\right)}$ to obtain
		\begin{align*}
			\frac{\|u\|_{L^\infty(Q_r)}}{r} &\leq C\left(\|u\|_{L^\infty(Q_{1/2})}\exp \left(C\int_{16r}^{2}\omega(s)\frac{\mathrm{d}s}{s} \right) + \int_{4r}^{2}[\omega_g+\omega_f](s)e^{C\left(\int_{16r}^{s/2}\omega(t)\frac{\mathrm{d}t}{t}\right)_+}\frac{\mathrm{d}s}{s}\right)\\
			&\leq C\left(\|u\|_{L^\infty(Q_1)} + \int_r^2[\omega_g+\omega_f](s)\frac{\mathrm{d}s}{s}\right)\exp \left( C\int_r^2\omega(s)\frac{\mathrm{d}s}{s} \right).
		\end{align*}
		\smallskip
        
		To complete the proof, let $(x,t),(y,s) \in \overline{\Omega}\cap Q_{1/2}$ such that the parabolic distance $d := {\rm d_p}((x,t),(y,s)) = |x-y| + |t-s|^{1/2} < \frac{r_0}{16}$. Let $d_x := \operatorname{dist}((x,t),\p_p\Omega)$, $d_y := \operatorname{dist}((y,s),\p_p\Omega)$, and assume without loss of generality that $t \geq s$. In what follows, we analyze five cases:
		\smallskip
		
		{\it Case 1.}  Assume $d_x \geq \frac{r_0}{4}$.
        \smallskip
        
        We define $u = \bar{u} + w$, where $\bar{u}$ and $w$ are solutions to the following problems.
        \[
		\left\{\begin{array}{rllll}
			\p_t\bar{u} - \L\bar{u}& = & 0 & \text{in} & Q_{r_0/8}(x, t)\\
			\bar{u}& = & u & \text{on} & \p_pQ_{r_0/8}(x, t)
		\end{array}\right.
        \]
        and
        \[
        \left\{\begin{array}{rllll}
			\p_t w - \L w& = & f & \text{in} & Q_{r_0/8}(x, t)\\
			w& = & 0 & \text{on} & \p_pQ_{r_0/8}(x, t).
		\end{array}\right.
		\]

        Then, by Theorem \ref{thm:gradient_estimate},
        $$|\bar u (x,t) - \bar u (y,s)| \leq d[\bar u]_{C^{0,1}_p(Q_{r_0/16}(x,t))} \leq Cd\|\bar u\|_{L^\infty(Q_{r_0/8}(x,t))} \leq Cd\|u\|_{L^\infty(Q_1)}.$$

        Moreover, a rescaling of Theorem \ref{thm:interior_reg_modulus} gives
        $$|w(x,t) - w(y,s)| \leq Cd\int_d^{r_0}\omega_f(r)\frac{\mathrm{d}r}{r}.$$

        Combining the estimates,
        $$|u(x,t) - u(y,s)| \leq |\bar u(x,t) - \bar u(y,s)| + |w(x,t) - w(y,s)| \leq Cd\left(\|u\|_{L^\infty(Q_1)} + \int_d^{r_0}\omega_f(r)\frac{\mathrm{d}r}{r}\right).$$

		\smallskip
		{\it Case 2.}  Assume $d_y \geq \frac{r_0}{4}$.
        \smallskip
        
        By the triangle inequality, the parabolic distance from $(y,t)$ to the boundary is at least $d_y - d \geq 3r_0/16 > r_0/8$, so $Q_{r_0/8}(y,t) \subset \Omega$. Since both $(x,t)$ and $(y,s)$ belong to $Q_{r_0/16}(y,t)$, we argue as in Case 1 to obtain
        $$|u(x,t) - u(y,s)| \leq Cd\left(\|u\|_{L^\infty(Q_1)} + \int_d^{r_0}\omega_f(r)\frac{\mathrm{d}r}{r}\right).$$

        \smallskip
		{\it Case 3.} Assume $\max\{d_x,d_y\} \leq \frac{r_0}{4}$ and $\max\{d_x,d_y\} \leq 4d$.
        \smallskip

        We first bound
        \[
		|v(x,t)| \leq Cd_x\left(\|u\|_{L^\infty(Q_1)}+\int_{d_x}^{2r_0}[\omega_g+\omega_f](s)\frac{\mathrm{d}s}{s}\right)\exp \left(C\int_{d_x}^{2r_0}\omega(s)\frac{\mathrm{d}s}{s} \right)
		\]
        and
        \[
		|v(y,s)| \leq Cd_y\left(\|u\|_{L^\infty(Q_1)}+\int_{d_y}^{2r_0}[\omega_g+\omega_f](s)\frac{\mathrm{d}s}{s}\right)\exp \left(C\int_{d_y}^{2r_0}\omega(s)\frac{\mathrm{d}s}{s} \right).
		\]
        Hence,
        \begin{align*}
        |u(x, t) - u(y, s)| &\leq |v(x,t)| + |v(y,s)| + |\nabla\!_x g(0, 0) (x-y)|\\
			                &\leq C d_x\left(\|u\|_{L^\infty(Q_1)}+\int_{d_x}^{2r_0}[\omega_g+\omega_f](s)\frac{\mathrm{d}s}{s}\right)\exp \left( C\int_{d_x}^{2r_0}\omega(s)\frac{\mathrm{d}s}{s}\right) \\
                            &\quad + C d_y\left(\|u\|_{L^\infty(Q_1)}+\int_{d_y}^{2r_0}[\omega_g+\omega_f](s)\frac{\mathrm{d}s}{s}\right)\exp \left( C\int_{d_y}^{2r_0}\omega(s)\frac{\mathrm{d}s}{s}\right) \\
                            &\quad + d|\nabla\!_x g(0, 0)|.
		\end{align*}
        Now,
        $$|\nabla\!_x g(0,0)| \leq r_0^{-1}\|g - g(0,0)\|_{L^\infty(Q_{r_0})} + \omega_g(r_0) \leq C\|u\|_{L^\infty(Q_1)} + \frac{1}{\ln 2}\int_{r_0}^{2r_0}\omega_g(s)\frac{\mathrm{d}s}{s}.$$
        Then,
        \begin{align*}
        |u(x,t) - u(y,s)| &\leq Cd_x\left(\|u\|_{L^\infty(Q_1)}+\int_{d_x}^{2r_0}[\omega_g+\omega_f](s)\frac{\mathrm{d}s}{s}\right)\exp \left(C\int_{d_x}^{2r_0}\omega(s)\frac{\mathrm{d}s}{s}\right)\\
        &\quad + Cd_y\left(\|u\|_{L^\infty(Q_1)}+\int_{d_y}^{2r_0}[\omega_g+\omega_f](s)\frac{\mathrm{d}s}{s}\right)\exp \left(C\int_{d_y}^{2r_0}\omega(s)\frac{\mathrm{d}s}{s}\right)\\
        &\quad + Cd\|u\|_{L^\infty(Q_1)} + Cd\int_{r_0}^{2r_0}\omega_g(s)\frac{\mathrm{d}s}{s}.
        \end{align*}
        Since $d_x, d_y \leq 4d$, replacing $d_x$ and $d_y$ by $d$ in the integrals is either trivial (if $d_x$ or $d_y$ is larger than $d$) or follows from Lemma \ref{lem:exp_modulus_monotonicity} (if smaller), giving
        $$|u(x,t) - u(y,s)| \leq Cd\left(\|u\|_{L^\infty(Q_1)}+\int_d^{2r_0}[\omega_g+\omega_f](s)\frac{\mathrm{d}s}{s}\right)\exp \left(C\int_d^{2r_0}\omega(s)\frac{\mathrm{d}s}{s}\right).$$

        \smallskip
		{\it Case 4.} Assume $d_x \leq \frac{r_0}{4}$, $d_x \ge d_y$ and $d \le \frac{d_x}{4}$.
        \smallskip
        
        First, let $x_* = (x',\Gamma(x',t))$ so that $(x_*, t) \in \p_\Gamma\Omega$, and note that by Lemma \ref{lem:dp_vs_dh}, $d_x \leq |x - x_*| \leq 2d_x$.
        
        Now, let $v = \bar{v} + w$ where $\bar{v}$ and $w$ solve
        \[
        \left\{\begin{array}{rllll}
            \p_t\bar{v} - \L\bar{v}& = & 0 & \text{in} & Q_{d_x/2}(x, t)\\
            \bar{v}& = & v & \text{on} & \p_pQ_{d_x/2}(x, t)
        \end{array}\right.
        \]
        and
        \[
        \left\{\begin{array}{rllll}
            \p_t w - \L w& = & f & \text{in} & Q_{d_x/2}(x, t)\\
            w& = & 0 & \text{on} & \p_pQ_{d_x/2}(x, t).
        \end{array}\right.
        \]
        Then, as in \textit{Case 1},
        $$|w(x,t) - w(y,s)| \leq Cd\int_d^{2d_x}\omega_f(r)\frac{\mathrm{d}r}{r}.$$
        On the other hand, using Theorem \ref{thm:gradient_estimate} and recalling $|x - x_*| \leq 2d_x$,
        \begin{align*}
            |\bar v(x,t) - \bar v(y,s)| &\leq d\|\bar v\|_{C^{0,1}_p(Q_{d_x/4}(x,t))} \leq C\frac{d}{d_x}\|\bar v\|_{L^\infty(Q_{d_x/2}(x,t))} \leq C\frac{d}{d_x}\|v\|_{L^\infty(Q_{5d_x/2}(x_*,t))}.
        \end{align*}
        Since $(x_*,t) \in \p_\Gamma\Omega$, the first part of the theorem gives
        $$\|v\|_{L^\infty(Q_{5d_x/2}(x_*,t))} \leq Cd_x\left(\|u\|_{L^\infty(Q_1)}+\int_{d_x}^{2r_0}[\omega_g+\omega_f](s)\frac{\mathrm{d}s}{s}\right)\exp \left(C\int_{d_x}^{2r_0}\omega(s)\frac{\mathrm{d}s}{s}\right),$$
        and therefore
        $$|\bar v(x,t) - \bar v(y,s)| \leq Cd\left(\|u\|_{L^\infty(Q_1)}+\int_{d_x}^{2r_0}[\omega_g+\omega_f](s)\frac{\mathrm{d}s}{s}\right)\exp \left(C\int_{d_x}^{2r_0}\omega(s)\frac{\mathrm{d}s}{s}\right).$$
        
        Combining the estimates for $\bar v$ and $w$, and since $d < d_x$,
        $$|v(x,t) - v(y,s)| \leq Cd\left(\|u\|_{L^\infty(Q_1)}+\int_d^{2r_0}[\omega_g+\omega_f](s)\frac{\mathrm{d}s}{s}\right)\exp \left(C\int_d^{2r_0}\omega(s)\frac{\mathrm{d}s}{s}\right).$$
        Finally, since $v = u - g(0,0) - \nabla\!_x g(0,0)\cdot x'$,
        \begin{align*}
        |u(x,t) - u(y,s)| &\leq |v(x,t) - v(y,s)| + d|\nabla\!_x g(0,0)|\\
        &\leq Cd\left(\|u\|_{L^\infty(Q_1)}+\int_d^{2r_0}[\omega_g+\omega_f](s)\frac{\mathrm{d}s}{s}\right)\exp \left(C\int_d^{2r_0}\omega(s)\frac{\mathrm{d}s}{s}\right),
        \end{align*}
        where in the last step we used the bound on $|\nabla\!_x g(0,0)|$ from \textit{Case 3}.
        
        \smallskip
		{\it Case 5.} Assume $d_y \leq \frac{r_0}{4}$, $d_y \ge d_x$ and $d \le \frac{d_y}{4}$.
        \smallskip

        By the triangle inequality, the parabolic distance from $(y,t)$ to the boundary is at least $d_y - d \geq 3d_y/4 > d_y/2$, so $Q_{d_y/2}(y,t) \subset \Omega$. Since both $(x,t)$ and $(y,s)$ belong to $Q_{d_y/4}(y,t)$, arguing as in \textit{Case 4},
        $$|u(x,t) - u(y,s)| \leq Cd\left(\|u\|_{L^\infty(Q_1)}+\int_d^{2r_0}[\omega_g+\omega_f](s)\frac{\mathrm{d}s}{s}\right)\exp \left(C\int_d^{2r_0}\omega(s)\frac{\mathrm{d}s}{s}\right).$$
		
		Finally, combining the five above cases completes the proof.
	\end{proof}

    \appendix
    \section{Proof of Theorem \ref{thm:interior_reg_modulus}}\label{sect:app}

    The Green function $G(x,t;\xi,\tau)$ for $\p_t - \mathcal{L}$ on $B_1 \times (-1,t)$ can be constructed from the fundamental solution $\Gamma(x,t;\xi,\tau)$ via standard methods, giving the representation
    $$u(x,t) = \int_{-1}^t\int_{B_1}G(x,t;\xi,\tau)f(\xi,\tau)\mathrm{d}\xi\mathrm{d}\tau.$$
    By \cite[Theorem 1.1]{ZC22}, $\Gamma$ satisfies Gaussian estimates which are inherited by $G$:
    $$|G(x,t;\xi,\tau)| \leq C|t-\tau|^{-\frac{n}{2}}\exp\left(-\frac{c|x-\xi|^2}{|t-\tau|}\right),$$
    $$|\nabla\!_xG(x,t;\xi,\tau)| \leq C|t-\tau|^{-\frac{n+1}{2}}\exp\left(-\frac{c|x-\xi|^2}{|t-\tau|}\right),$$
    and
    $$|\p_tG(x,t;\xi,\tau)| \leq C|t-\tau|^{-\frac{n}{2}-1}\exp\left(-\frac{c|x-\xi|^2}{|t-\tau|}\right),$$
    for some positive constants $C$ and $c$, depending only on the dimension, ellipticity constants and the modulus of continuity of the coefficients of $\L$.

    Now, let $r = |x - y| + |t - s|^{1/2}$, and assume without loss of generality that $t \geq s$. We split the integration domain in four regions:
    \begin{align*}
        B_1 \times (-1,t) &\subset (x,t) + \left(Q_{2r} \cup \bigcup\limits_{k = 1}^M (B_{2^{k+1}r}\setminus B_{2^kr})\times(-4r^2,0)\right.\\
        &\qquad\left.\cup \bigcup\limits_{k = 1}^M B_{2^{k+1}r}\times(-4^{k+1}r^2,-4^kr^2)\right.\\
        &\qquad\left.\cup \bigcup\limits_{k = 1}^M\bigcup\limits_{l = 1}^{k-1} (B_{2^{k+1}r}\setminus B_{2^kr})\times (-4^{l+1}r^2,-4^lr^2)\right),
    \end{align*}
    where we choose $M$ to be the minimum positive integer such that $2^{M+1}r \geq \frac{5}{4}$.

    To control the oscillation of $u$, we will compute a bound on
    $$\int|G(x,t;\xi,\tau) - G(y,s;\xi,\tau)|f(\xi,\tau)\mathrm{d}\xi\mathrm{d}\tau$$
    in each region, and then combine the estimates.

    \begin{itemize}
        \item First, in $Q_{2r}(x,t)$, we can simply bound
        $$|G(x,t;\xi,\tau) - G(y,s;\xi,\tau)| \leq |G(x,t;\xi,\tau)| + |G(y,s;\xi,\tau)|,$$
        and then
        \begin{align*}
        \int_{Q_{2r}(x,t)}|G(x,t;\xi,\tau)f(\xi,\tau)|\mathrm{d}x\xi\mathrm{d}\tau &\lesssim \int_{Q_{2r}}|\tau|^{-\frac{n}{2}}\exp\left(-\frac{c|\xi|^2}{|\tau|}\right)|f(x+\xi,t+\tau)|\mathrm{d}\xi\mathrm{d}\tau\\
        &\leq \left(\int_{Q_{2r}}|\tau|^{-\frac{n+1}{2}}\exp\left(-\frac{c(n+1)|\xi|^2}{n|\tau|}\right)\mathrm{d}\xi\mathrm{d}\tau\right)^{\frac{n}{n+1}}\\
        &\qquad\times\left(\int_{Q_{2r}}|f(x+\xi,t+\tau)|^{n+1}\mathrm{d}\xi\mathrm{d}\tau\right)^{\frac{1}{n+1}}\\
        &\lesssim \left(\int_{-4r^2}^0|\tau|^{-\frac{1}{2}}\mathrm{d}\tau\right)^{\frac{n}{n+1}}(2r)^{\frac{1}{n+1}}\omega_f(2r)\\
        &\lesssim r\omega_f(2r).
        \end{align*}
        An analogous computation gives the same bound for $G(y,s;\cdot)$.

        \item In $(B_{2^{k+1}r}(x)\setminus B_{2^kr}(x))\times(t-4r^2,t)$, we first do the following auxiliary computation. For $a > 0$ and $t \in (-4r^2,0)$,
        \begin{align*}
            \int_{B_{2^{k+1}r}\setminus B_{2^kr}}\exp\left(-a\frac{|x|^2}{|t|}\right)\mathrm{d}x &= |t|^{\frac{n}{2}}\int_{B_{2^{k+1}|t|^{-1/2}r}\setminus B_{2^k|t|^{-1/2}r}}\exp\left(-a|z|^2\right)\mathrm{d}z\\
            &\leq |t|^{\frac{n}{2}}\int_{B_{2^{k+1}|t|^{-1/2}r}}\exp\left(-a|z|^2\right)\mathrm{d}z\\
            &\leq C_{n,a}'|t|^{\frac{n}{2}}\int_{2^k}^\infty\zeta^{n-1}e^{-a\zeta^2}\mathrm{d}\zeta \leq C_{n,a}\cdot 2^{-k}|t|^{\frac{n}{2}}.
        \end{align*}
        
        Then, we also treat $G(x,t;\cdot)$ and $G(y,s;\cdot)$ separately:
        \begin{align*}
            &\int_{B_{2^{k+1}r}(x)\setminus B_{2^kr}(x)}\int_{t-4r^2}^t |G(x,t;\xi,\tau)f(\xi,\tau)|\mathrm{d}x\xi\mathrm{d}\tau\\
            &\qquad \lesssim \int_{B_{2^{k+1}r}\setminus B_{2^kr}}\int_{-4r^2}^0|\tau|^{-\frac{n}{2}}\exp\left(-\frac{c|\xi|^2}{|\tau|}\right)|f(x+\xi,t+\tau)|\mathrm{d}\xi\mathrm{d}\tau\\
            &\qquad \lesssim \left(\int_{B_{2^{k+1}r}\setminus B_{2^kr}}\int_{-4r^2}^0|\tau|^{-\frac{n+1}{2}}\exp\left(-\frac{c'|\xi|^2}{|\tau|}\right)\mathrm{d}\xi\mathrm{d}\tau\right)^{\frac{n}{n+1}}(2^{k+1}r)^{\frac{1}{n+1}}\omega_f(2^{k+1}r)\\
            &\qquad \lesssim \left(2^{-k}\int_{-4r^2}^0|\tau|^{-\frac{1}{2}}\mathrm{d}\tau\right)^{\frac{n}{n+1}}(2^{k+1}r)^{\frac{1}{n+1}}\omega_f(2^{k+1}r) \lesssim r\omega_f(2^{k+1}r).
        \end{align*}

        \item In $B_{2^{k+1}r}(x) \times (t-4^{k+1}r^2, t-4^kr^2)$, we use the mean value theorem and the estimates on $\nabla\!_xG$ and $\p_tG$ to compute
        \begin{align*}
            &\int_{B_{2^{k+1}r}(x)}\int_{t-4^{k+1}r^2}^{t-4^kr^2}\big|(G(x,t;\xi,\tau) - G(y,s;\xi,\tau))f(\xi,\tau)\big|\mathrm{d}\xi\mathrm{d}\tau\\
            &\qquad \lesssim \int_{B_{2^{k+1}r+r}}\int_{-4^{k+1}r^2-r^2}^{-4^kr^2+r^2}\left(\frac{r}{|\tau|^{\frac{1}{2}}}+\frac{r^2}{|\tau|}\right)|\tau|^{-\frac{n}{2}}\exp\left(-\frac{c|\xi|^2}{|\tau|}\right)|f(x+\xi,t+\tau)|\mathrm{d}\xi\mathrm{d}\tau\\
            &\qquad \leq 2^{-k}\left(\int_{B_{2^{k+1}r+r}}\int_{-4^{k+1}r^2-r^2}^{-4^kr^2+r^2}|\tau|^{-\frac{n+1}{2}}\exp\left(-\frac{c'|\xi|^2}{|\tau|}\right)\mathrm{d}\xi\mathrm{d}\tau\right)^{\frac{n}{n+1}}\\[0.05cm]
            &\qquad\qquad\times(2^{k+1}r + r)^{\frac{1}{n+1}}\omega_f(2^{k+1}r + r)\\
            &\qquad \lesssim 2^{-k}\left(\int_{-4^{k+1}r^2-r^2}^{4^kr^2 + r^2}|\tau|^{-\frac{1}{2}}\mathrm{d}\tau\right)^{\frac{n}{n+1}}(2^kr)^{\frac{1}{n+1}}\omega_f((2^{k+1}+1)r) \lesssim r\omega_f((2^{k+1}+1)r).
        \end{align*}

        \item Finally, an analogous argument in $(B_{2^{k+1}r}\setminus B_{2^kr}) \times (-4^{l+1}r^2,-4^lr^2)$ gives (note that $r^2|\tau|^{-1}<r|\tau|^{-\frac{1}{2}} < 1$):
        \begin{align*}
            &\int_{B_{2^{k+1}r}(x) \setminus B_{2^kr}(x)}\int_{t-4^{l+1}r^2}^{t-4^lr^2}\big|(G(x,t;\xi,\tau) - G(y,s;\xi,\tau))f(\xi,\tau)\big|\mathrm{d}\xi\mathrm{d}\tau\\
            &\qquad \lesssim \int_{B_{2^{k+1}r+r} \setminus B_{2^kr - r}}\int_{-4^{l+1}r^2-r^2}^{-4^lr^2+r^2}\frac{r}{|\tau|^{\frac{1}{2}}}|\tau|^{-\frac{n}{2}}\exp\left(-\frac{c|\xi|^2}{|\tau|}\right)|f(x+\xi,t+\tau)|\mathrm{d}\xi\mathrm{d}\tau\\
            &\qquad \lesssim 2^{-l}(2^lr)^{-n}\exp\left(-c'\cdot 4^{k-l}\right)\int_{B_{2^{k+1}r+r} \setminus B_{2^kr - r}}\int_{-4^{l+1}r^2-r^2}^{-4^lr^2+r^2}|f(x+\xi,t+\tau)|\mathrm{d}\xi\mathrm{d}\tau\\
            &\qquad \lesssim 2^{-l}(2^lr)^{-n}\exp\left(-c'\cdot 4^{k-l}\right)\left((2^kr)^n(4^lr^2)\right)^{\frac{n}{n+1}}(2^{k+1}r+r)^{\frac{1}{n+1}}\omega_f(2^{k+1}r+r)\\
            &\qquad \lesssim 2^{-l-nl+(kn+2l)\frac{n}{n+1}+\frac{k}{n+1}}r^{-n+(n+2)\frac{n}{n+1}+\frac{1}{n+1}}\exp\left(-c'\cdot 4^{k-l}\right)\omega_f((2^{k+1}+1)r)\\
            &\qquad = 2^{(k-l)\frac{n^2+1}{n+1}}\exp\left(-c'\cdot 4^{k-l}\right)r\omega_f((2^{k+1}+1)r).
        \end{align*}
    \end{itemize}

    Now, combining the estimates in all regions,
    \begin{align*}
        |u(x,t) - u(y,s)| &\lesssim r\omega_f(2r) + \sum\limits_{k=1}^M r\omega_f(2^{k+1}r) + \sum\limits_{k = 1}^M r\omega_f((2^{k+1}+1)r)\\
        &\qquad+\sum\limits_{k=1}^M\sum\limits_{l=1}^{k-1}2^{\frac{n^2+1}{n+1}(k - l)}e^{-c'\cdot 4^{k-l}}r\omega_f((2^{k+1}+1)r)\\
        &\lesssim r\sum\limits_{k=1}^M\left[\omega_f((2^{k+1}+1)r)\sum\limits_{m = 1}^{k - 1}2^{\frac{n^2+1}{n+1}m}e^{-c'\cdot 4^m}\right]\\
        &\leq r\sum\limits_{k=1}^M\omega_f((2^{k+1}+1)r)\sum\limits_{m \geq 1}2^{\frac{n^2+1}{n+1}m}e^{-c'\cdot 4^m}\\
        &\lesssim r\sum\limits_{k=1}^M\omega_f((2^{k+1}+1)r) \lesssim r\int_r^{4}\omega_f(\rho)\frac{\mathrm{d}\rho}{\rho},
    \end{align*}
    where we used that $(2^{M+1}+1)r < 3$.

	\textbf{Data availability statement.}
	Data sharing not applicable to this article as no datasets were generated or analysed during the current study.
	
	\textbf{Conflict of interest statement.}
	The authors have no competing interests to declare that are relevant to the content of this article.


\begin{thebibliography}{MMW17}
    
		
		\bibitem[AN22]{AN22} D. Apushkinskaya, A. Nazarov, \textit{The normal derivative lemma and surrounding issues}, Russian Math. Surveys \textbf{77} (2022), 189–249.
		
		\bibitem[CC95]{CC95} L. Caffarelli, X. Cabré, \textit{Fully Nonlinear Elliptic Equations}, Amer. Math. Soc. Colloq. Publ. \textbf{43} (1995).
		
		
		\bibitem[CSV18]{CSV18} M. Colombo, L. Spolaor, B. Velichkov, \textit{A logarithmic epiperimetric inequality for the obstacle problem}, Geom. Funct. Anal. \textbf{28} (2018), 1029–1061.
		
		\bibitem[CSV20]{CSV20} M. Colombo, L. Spolaor, B. Velichkov, \textit{Direct epiperimetric inequalities for the thin obstacle problem and applications}, Comm. Pure Appl. Math. \textbf{73} (2020), 384-420.

        \bibitem[CKS00]{CKS00} M. Crandall, M. Kocan, A. Swiech, \textit{$L^p$ theory for fully nonlinear uniformly parabolic equations}, Comm. Partial Differential Equations \textbf{25} (2000), 1997-2053.

        \bibitem[DKM14]{DKM14} P. Daskalopoulos, T. Kuusi, G. Mingione, \textit{Borderline estimates for fully nonlinear elliptic equations}, Comm. Partial Differential Equations \textbf{39} (2014), 574–590.
        
		
		\bibitem[DS22]{DS22} D. De Silva, O. Savin, \textit{On the parabolic boundary {H}arnack principle}, La Matematica \textbf{1} (2022), 1-18.
		

        \bibitem[DLL25]{DLL25} J. Dong, X. Li, Y. Lian, \textit{Boundary Regularity for Fully Nonlinear Parabolic equations on $C^{1,\mathrm{Dini}}$ Domains}, preprint arXiv (2025).

		\bibitem[FR22]{FR22} X. Fernández-Real, X. Ros-Oton, \textit{Regularity Theory for Elliptic PDE}, Zurich Lectures in Advanced Mathematics, EMS Press (2022).
		
        \bibitem[FRT26]{FRT26} G. Fioravanti, X. Ros-Oton, C. Torres-Latorre, \textit{Extinction rates for nonradial solutions to the Stefan problem}, Trans. Amer. Math. Soc., in press (2026).

       \bibitem[Fri58]{Fri58} A. Friedman, \textit{Remarks on the maximum principle for parabolic equations and its applications}, Pacific J. Math. \textbf{8} (1958), 201-211.
		
		\bibitem[HR19]{HR19} M. Hadžić, P. Raphaël, \textit{On melting and freezing for the 2D radial Stefan problem}, J. Eur. Math. Soc. \textbf{21} (2019), 3259–3341.
		

        \bibitem[Kam88]{Kam88} L. Kamynin, \textit{A theorem on the interior derivative for a second-order uniformly parabolic equation}, Dokl. Akad. Nauk SSSR \textbf{299} (1988), 280-283 (in Russian). English transl. in Sov. Math. Dokl. \textbf{37} (1988), 373-376.

        \bibitem[KK73]{KK73} L. Kamynin, B. Khimchenko, \textit{On Giraud's theorem for a second-order parabolic equation}, Sibirsk. Mat. Zh. \textbf{14} (1973), 86-96 (in Russian). English transl. in Sib. Math. J. \textbf{14} (1973), 59-66.
         
        \bibitem[KK74]{KK74} L. Kamynin, B. Khimchenko, \textit{On the maximum principle and Lipschitz boundary estimates for the solution of a second-order parabolic equation}, Sibirsk. Mat. Zh. \textbf{15} (1974), 343-367 (in Russian). English transl. in Sib. Math. J. \textbf{15} (1974), 242-260.
         
        \bibitem[KK75]{KK75} L. Kamynin, B. Khimchenko, \textit{On local Lipschitz estimates of solutions of a second-order parabolic equation near the lateral boundary of the domain}, Sibirsk. Mat. Zh. \textbf{16} (1975), 1264-1282 (in Russian). English transl. in Sib. Math. J. \textbf{16} (1975), 897-909.
        
		
		\bibitem[Kry76]{Kry76} N. Krylov, \textit{Sequences of convex functions and estimates of the maximum of the solution of a parabolic equation}, Sib. Math. J. \textbf{17} (1976), 226-236.
		
		\bibitem[LU88]{LU88} O. Ladyzhenskaya, N. Ural'tseva, \textit{Estimates on the boundary of the domain of first derivatives of functions satisfying an elliptic or a parabolic inequality}, Boundary value problems of mathematical physics. 13, Tr. Mat. Inst. Steklova \textbf{179}, Nauka, Moscow 1988, pp. 102–125 (in Russian). English transl. in Proc. Steklov Inst. Math. \textbf{179} (1989), 109-135.
		
		
		\bibitem[Lie85]{Lie85} G. Lieberman, \textit{Regularized distance and its applications}, Pacific J. Math. \textbf{117} (1985), 329–352.
		
		
		\bibitem[MMW17]{MMW17} F. Ma, D. Moreira, L. Wang, \textit{Differentiability at lateral boundary for fully nonlinear parabolic equations}, J. Differential Equations \textbf{263} (2017), 2672-2686.
		
		
		\bibitem[Naz12]{Naz12} A. Nazarov, \textit{A centennial of the Zaremba-Hopf-Oleinik lemma}, SIAM J. Math. Anal. \textbf{44} (2012), 437-453.
		
       
       \bibitem[Nir53]{Nir53} L. Nirenberg, \textit{A strong maximum principle for parabolic equations}, Comm. Pure Appl. Math.  \textbf{6} (1953), 167-177.
		

        \bibitem[Puc66]{Puc66} C. Pucci, \textit{Operatori ellittici estremanti},  Ann. Mat. Pura Appl. (4) \textbf{72} (1966), 141 -170.

        \bibitem[Tei14]{Tei14} E. Teixeira, \textit{Universal moduli of continuity for solutions to fully nonlinear elliptic equations}, Arch. Rational Mech. Anal. \textbf{211} (2014), 911–927.
        
		\bibitem[Tor24]{Tor24} C. Torres-Latorre, \textit{Parabolic boundary Harnack inequalities with right-hand side}, Arch. Rational Mech. Anal. \textbf{248} (2024), 73.

        \bibitem[Tor26]{Tor26} C. Torres-Latorre, \textit{Boundary estimates for non-divergence equations in $C^1$ domains}, Calc. Var. Partial Differential Equations \textbf{65} (2026), 11.
		
		\bibitem[Tso85]{Tso85} K. Tso, \textit{On an Aleksandrov-Bakel'Man type maximum principle for second-order parabolic equations}, Comm. Partial Differential Equations \textbf{10} (1985), 543-553.

       \bibitem[Vyb57]{Vyb57} R.  {V\'yborn\'y}, \textit{On the properties of the solutions of some boundary problems for equations of parabolic type}, Dokl. Akad. Nauk SSSR, \textbf{117},  (1957), 563-565 (in Russian).

        \bibitem[ZC22]{ZC22} I. Zhenyakova, M. Cherepova, \textit{The Cauchy problem for a multi-dimensional parabolic equation with Dini-continuous coefficients}, J. Math. Sci. \textbf{264} (2022), 581-602.
		
	\end{thebibliography}
\end{document}